%% file: main.tex
\documentclass{article} %
\usepackage{amssymb,amsfonts,amsmath,amsthm}
\usepackage{bbm}
\usepackage{enumitem}
\usepackage{algorithm}
\usepackage[noEnd]{algpseudocodex}
\usepackage{mathdots}
\usepackage{mathtools}
\usepackage{fullpage}
\definecolor{ao(english)}{rgb}{0.0, 0.5, 0.0}

\usepackage[colorlinks, citecolor = {ao(english)}, linkcolor = {ao(english)}]{hyperref} 
\usepackage[nameinlink]{cleveref}
\usepackage{aux}

\DeclareMathOperator{\adj}{adj}
\DeclareMathOperator{\comp}{C}
\DeclareMathOperator{\rank}{\mathrm{rank}}

\algnewcommand{\LineComment}[1]{\Statex \(\triangleright\) #1}

\usepackage{graphicx}
\usepackage{mathdots}
\usepackage{amsopn}

\usepackage{dsfont}
\let\mathbb=\mathds
\newtheoremstyle{boldtitle}
{3pt}{3pt}      %
{}              %
{}              %
{\bfseries}     %
{.}             %
{ }             %
{}              %

\theoremstyle{boldtitle}
\newtheorem{thm}{Theorem}
\crefname{thm}{Theorem}{Theorems}
\newlist{thmenum}{enumerate}{1} %
\setlist[thmenum]{label=\roman*), ref=\thethm~\roman*)}
\crefalias{thmenumi}{thm} 
\newtheorem{prop}{Proposition}
\crefname{prop}{Proposition}{Propositions}
\newlist{propenum}{enumerate}{1} %
\setlist[propenum]{label=\roman*), ref=\theprop~\roman*)}
\crefalias{propenumi}{prop}

\newtheorem{lem}{Lemma}
\crefname{lem}{Lemma}{Lemmas}
\newlist{lemenum}{enumerate}{1} %
\setlist[lemenum]{label=\roman*), ref=\thelem~\roman*)}
\crefalias{lemenumi}{lem}

\newtheorem{cor}{Corollary}
\crefname{cor}{Corollary}{Corollaries}
\newlist{corenum}{enumerate}{1} %
\setlist[corenum]{label=\roman*), ref=\thecor~\roman*)}
\crefalias{corenumi}{cor} 

\newtheorem{rem}{Remark}
\crefname{rem}{Remark}{Remarks}
\newlist{remenum}{enumerate}{1} %
\setlist[remenum]{label=\roman*), ref=\therem~\roman*)}
\crefalias{remenumi}{rem} 

\newtheorem{ex}{Example}
\crefname{ex}{Example}{Examples}
\newtheorem{ass}{Assumption}
\crefname{ass}{Assumption}{Assumption}

\crefname{conj}{Conjecture}{Conjectures}

\crefname{defn}{Definition}{Definitions}
\newlist{defnenum}{enumerate}{1} %
\setlist[defnenum]{label=\roman*., ref=\thedefn~(\roman*.)}
\crefalias{defnenumi}{defn}

\crefname{definition}{Definition}{Definitions}
\usepackage[toc,page]{appendix}
\crefname{appendix}{Appendix}{Appendices}
\Crefname{appendix}{Appendix}{Appendices}

\crefname{algorithm}{Algorithm}{Algorithms}
\crefformat{equation}{\textup{#2(#1)#3}}
\crefrangeformat{equation}{\textup{#3(#1)#4--#5(#2)#6}}
\crefmultiformat{equation}{\textup{#2(#1)#3}}{ and \textup{#2(#1)#3}}
{, \textup{#2(#1)#3}}{, and \textup{#2(#1)#3}}
\crefrangemultiformat{equation}{\textup{#3(#1)#4--#5(#2)#6}}%
{ and \textup{#3(#1)#4--#5(#2)#6}}{, \textup{#3(#1)#4--#5(#2)#6}}{, and \textup{#3(#1)#4--#5(#2)#6}}

\Crefformat{equation}{#2Equation~\textup{(#1)}#3}
\Crefrangeformat{equation}{Equations~\textup{#3(#1)#4--#5(#2)#6}}
\Crefmultiformat{equation}{Equations~\textup{#2(#1)#3}}{ and \textup{#2(#1)#3}}
{, \textup{#2(#1)#3}}{, and \textup{#2(#1)#3}}
\Crefrangemultiformat{equation}{Equations~\textup{#3(#1)#4--#5(#2)#6}}%
{ and \textup{#3(#1)#4--#5(#2)#6}}{, \textup{#3(#1)#4--#5(#2)#6}}{, and \textup{#3(#1)#4--#5(#2)#6}}

\crefdefaultlabelformat{#2\textup{#1}#3}

\usepackage[ backend=biber, style=numeric, eprint=true,doi=false,url=false,giveninits=true,style=numeric-comp]{biblatex}
\AtEveryBibitem{\clearfield{issn}}
\title{Inversion of the Multiplicative Matrix Compound Operator \thanks{The work of Debojyoti Dey and Christian Grussler was supported by the Israel Science Foundation (grant no. 2406/22) and the Bernard M. Gordon Center for Systems Engineering at the Technion -- IIT. The work of Ron Ofir was supported in part by the Air Force Office of Scientific Research, under award numbers FA9550-23-1-0175 and FA9550-25-1-0223, and by the Viterbi Fellowship, Technion -- IIT.}}

\author{Debojyoti Dey,  \; \thanks{D. Dey is with the Stephen B. Klein Faculty of Aerospace Engineering, Technion --- Israel Institute of Technology, 3200003 Haifa, Israel
		{\tt debojyot@campus.technion.ac.il}}  Ron Ofir \thanks{R. Ofir is with the Department of Electrical \& Computer Engineering, Yale University, 06520 New Haven, CT, United States 
		{\tt ron.ofir@yale.edu}}, \; Christian Grussler\thanks{C. Grussler is with the Stephen B. Klein Faculty of Aerospace Engineering, Technion --- Israel Institute of Technology, 3200003 Haifa, Israel
		{\tt cgrussler@technion.ac.il}}} 

\begin{document}

\maketitle

\begin{abstract}
	We study the problem of determining a matrix whose $k$th multiplicative compound, with~$k > 1$, is a prescribed matrix~$M$. The cardinality of the set of matrices whose $k$th multiplicative compound equals~$M$ is characterized in terms of $\rank(M)$. On the one hand, if $\rank(M)\le 1$, it is shown that there exist infinitely many such matrices for which a complete characterization is determined. On the other hand, if $\rank(M)>1$, then there exists a unique matrix --- up to an overall sign --- whose compound is~$M$. An algorithm for finding a matrix whose compound equals~$M$ is detailed, and its time complexity is analyzed. %
\end{abstract}
\input{intro}

\input{prelim}

\input{main_result}

\input{proof_main}

\input{conclusion}

\printbibliography

\begin{appendices}
	\input{a1}
\end{appendices}

\end{document}

%% file: intro.tex
\section{Introduction}
\label{sec:intro}
Compound matrices are a classical object in linear and multilinear algebra, appearing in a range of applications, including geometry~\cite{Gantmacher1960VolI}, graph theory~\cite{Fiedler1998AdditiveCompGraphs,Bapat2004GraphEnergy}, algebraic topology~\cite{Lew2024Laplacian}, numerical analysis~\cite{Ng1985CompoundMethod,Allen2002NumericalExteriorAlgebra}, and electric network theory~\cite{Bryant1963compound}. More recently, they have proven useful in dynamical systems and control theory~\cite{BarShalom2023Tutorial}. Following the seminal works of Schwarz~\cite{Schwarz1970} and Li and Muldowney~\cite{Muldowney1990compound,Li1995}, which introduced compound matrices into this setting, there has been growing interest in their use for studying linear and nonlinear systems. System operators with so-called variation-diminishing and -bounding properties have been studied in \cite{Grussler2022variation,grussler2021internally,grussler2026efficient,grussler2024system} with applications to model order reduction \cite{grussler2020balanced}, frequency domain analysis \cite{grussler2023monotonicity}, bounds on self-sustained oscillations \cite{tong2026unimodal}, as well as sparse optimization \cite{marmary2025tractabledownfallbasispursuit}. Autonomous system analysis brought forward new results on the global stability of systems with multiple equilibrium points~\cite{Wu2022kcont, Angeli2021nonosc}, with applications ranging from power systems~\cite{Cecilia2026genlyap}, to opinion dynamics~\cite{Ofir2024Luriekcont} and neural networks~\cite{Richardson2025lurie}. All of these results follow a common paradigm: a classical condition formulated in terms of a matrix $X \in \mathds{R}^{n \times m}$ (e.g., the Jacobian of the vector field of an autonomous system or the system operators associated with an input-output system) is lifted to a condition on its $k$th order minors or, equivalently, on the \emph{$k$th multiplicative compound matrix} $C_k(X) \in \mathds{R}^{\binom{n}{k} \times \binom{m}{k}}$ (see~\Cref{sec:prelims} for a formal definition). 

As a concrete example, a linear time-invariant system $$x(t+1) = Ax(t)$$ is called $k$-positive if all entries of the~$k$th multiplicative compound of~$A$ have the same sign~\cite{Alseidi2021DTkpos}, which is an extension of the well-known class of \emph{internally positive systems} \cite{farina2011positive,luenberger1979introduction,ohta1984reachability}, where $A$ is a nonnegative matrix (i.e., all its $1$st order minors share the same sign). However, as these properties are coordinate dependent, the question arises whether for given $A$ there exists a coordinate transformation $P \in \mathds{R}^{n \times n}$ such that $y(t+1) := P^{-1}x(t+1) = P^{-1}APy(t)$ defines a $k$-positive system, or equivalently, 
\begin{equation}
    \exists P, N \in \mathds{R}^{n \times n}: AP = P N, \label{eq:PN}
\end{equation}
where $C_k(N)$ is a nonnegative/nonpositive matrix. In case of $k=1$, this question is known as (or part of) the so-called \emph{positive realization problem} \cite{benvenuti2004tutorial,farina2011positive,ohta1984reachability}, and equivalent to the \emph{nonnegative inverse eigenvalue problem (NIEP)} \cite{loewy2017necessary,egelston2004nonnegative,cronin2018diagonalizable,grussler2022similarity}. Problems with $k > 1$ have also been studied, for example, in \cite[Theorem $3.10$]{grussler2021internally} to find an internally Hankel $k$-positive realization of an LTI system, and \cite{Weiss2021IsKpos}, which studies a related problem in the continuous-time setting and for a restricted class of coordinate transformations.

While no closed-form solution to \label{eq:PN} is currently known, heuristic approaches solve \eqref{eq:PN} for $k=1$ through alternating iterative optimization \cite{orsi2006numerical}. In an attempt to derive similar heuristic approaches for the cases of $k > 1$, one may rewrite \eqref{eq:PN} by the \emph{Cauchy-Binet formula}\cite[Subsection $0.8.7$]{hornMatrix} as
\begin{equation*}
    \exists P, N \in \mathds{R}^{n \times n}: C_k(A)C_k(P) = C_k(P) C_k(N), \label{eq:PN_k}
\end{equation*}
and, therefore, take advantage of the NIEP by finding $Q,M \in \mathds{R}^{\binom{n}{k} \times \binom{n}{k}}$ such that
\begin{equation*}
    \comp_k(A)Q = QM
\end{equation*}\
with nonnegative/nonpositive $M$, and complementing this approach with projections of $Q$ and $M$ onto the manifold $$\{\comp_k(X) \in \mathds{R}^{\binom{n}{k} \times \binom{n}{k}}: X \in \mathds{R}^{n \times n}\}.$$ While we leave a full algorithm to future work, the first immediate question is to find the $k$th \textit{inverse compound} $N$, such that $M=\comp_k(N)$ (similarly finding $P$ such that $\comp_k(P)=Q$). Despite the long history of compound matrices, this inverse problem has not been considered before, to the best of the authors' knowledge. Given a matrix $M$ of appropriate dimensions, two fundamental questions arise:
(i) What are the necessary and sufficient conditions for the existence of a matrix $A$ whose~$k$th multiplicative compound is equal to~$M$? 
(ii) Assuming such a matrix exists, how can one characterize and compute all matrices $A$ satisfying this relation?

In this paper, we address the second question. We provide a complete characterization of the preimage of the $k$th multiplicative compound operator, showing in particular that the preimage is unique --- up to an overall sign --- under certain conditions, and that otherwise it contains infinitely many matrices. We then develop a constructive algorithm for recovering all matrices $A$ whose $k$th compound equals $M$, and show that the computational complexity of the algorithm is polynomial in the dimension of the matrix. %
A MATLAB implementation of the proposed algorithms is available at \url{https://github.com/debojyoti23/inverse-compound}.

%% file: prelim.tex
\section{Preliminaries}
\label{sec:prelims}
In this section, we introduce the basic notations and concepts that are useful for our subsequent discussions and derivations.

\subsection{Notations}
We use $\bR$ for the set of real numbers, $\bR_{>0}$ for positive real numbers, and $\bZ$ for the set of integers.
For $r,s\in\bZ$ we write $(r:s)=\{r,r+1,\ldots,s\}$ where $r\leq s$ and if $r>s$, the notation denotes the empty set. For the special case of $r=1$ and a positive integer $n$, we also use $[n]:=(1:n)$. The cardinality of a set $\cS$ is denoted by $|\cS|$. For a tuple $I\subset (1:n)$, we denote its complement by $I^c$ such that $I\cup I^c = (1:n)$.
A matrix $X = (x_{ij}) \in\bR^{n\times m}$ is a rectangular matrix over the real field with $n$ rows and $m$ columns.
For a set of vectors $v_1,\ldots,v_n\in\bR^n$, we use $v_{ji}$ to denote the $j$th element of $v_i$.
The exponentional $\exp(\cdot)$ and logarithm $\log(\cdot)$ are applied element-wise on vectors, i.e., for $v\in\bR^n$, $\exp(v)_i=\exp(v_i)$ and $\log(v)_i=\log(v_i)$.
We use $\diag(X)$ to denote the vector of the elements of the main diagonal of $X$, whereas,
$\diag(u)$ for $u\in\bR^n$ denotes a diagonal matrix with $u$ being the main diagonal.
The identity matrix in $\bR^{n\times n}$ is denoted by $I_n$, or simply by $I$ where the dimension is obvious.
The submatrix of $X \in \mathds{R}^{n \times m}$ with row-indices $I\subset (1:n)$ and column-indices $J\subset (1:m)$ is denoted as $X_{I,J}$. In particular, $X_{:,J} := X_{(1:n),J}$ and similarly, $X_{I,:} :=  X_{I,(1:m)}$. We denote the ordered set of increasing $k$-tuples by,
\[
\cI_{n,k} := \{(v_1,\ldots,v_k) \cond 1\leq v_1<\cdots<v_k\leq n, v_i\in\bN \, \forall i\}
\]
where the $i$th element of this set is with respect to the \textit{lexicographic ordering.}
The $k$th \textit{multiplicative compound} of $X \in \mathds{R}^{n \times m}$, denoted by $\comp_k(X)\in \bR^{\binom{n}{k}\times\binom{m}{k}}$, is the matrix whose~$(i,j)$th element is $\det (X_{I,J})$ where $I$ is the $i$th element in $\cI_{n,k}$ and $J$ is the $j$th element in $\cI_{m,k}$.
For example, let $X\in\bR^{3\times 3}$, then
\[
\comp_k(X) = 
\begin{pmatrix}
    \det (X_{\{1,2\},\{1,2\}}) & \det (X_{\{1,2\},\{1,3\}}) & \det (X_{\{1,2\},\{2,3\}})\\
    \det (X_{\{1,3\},\{1,2\}}) & \det (X_{\{1,3\},\{1,3\}}) & \det (X_{\{1,3\},\{2,3\}})\\
    \det (X_{\{2,3\},\{1,2\}}) & \det (X_{\{2,3\},\{1,3\}}) & \det (X_{\{2,3\},\{2,3\}})
\end{pmatrix}
\]
We, further, define $\comp_{0}(X):=1$ and note that $\comp_n(X)=\det (X)$ if $X\in \bR^{n\times n}$. The operator is called \textit{multiplicative} because for two matrices $A\in\bR^{n\times p}$ and $B\in\bR^{p\times m}$, the so-called \emph{Cauchy-Binet formula} \cite[Subsection $0.8.7$]{hornMatrix} implies
\begin{equation}
    \label{eq:cauchybinet}
    \comp_k(AB) = \comp_k(A)\comp_k(B) \ \ (k\leq \min\{m,n,p\})
\end{equation}
For a compound matrix $M=\comp_k(X)$, we use $M(I, J)$ to denote the $(i,j)$th element of $M$, where $I$ and $J$ are the $i$th and $j$th element in $\cI_{n,k}$ respectively. The following \cref{thm:compoundProperty} summarizes the properties (see \cite[Subsection $0.8.1$]{hornMatrix}, \cite[Subsection $0.iv$]{karlinTP}) of a compound matrix, which will be used in our further discussion.
\begin{lem}
\label{thm:compoundProperty}
Let $A\in \bR^{n\times m}$.
\begin{enumerate}
    \item If $n=m$, then~$A$ is non-singular if and only if~$\comp_k(A)$ is non-singular, in which case $\comp_k(A^{-1})=\comp_k(A)^{-1}$.
    \item For any~$t \in \bR$, $\comp_k(tA)=t^k \comp_k(A)$.
    \item If $n=m$, then the eigenvalues of $\comp_k(A)$ are given by
    $\prod_{i\in I}\lambda_i(A)$, $I\in\cI_{n,k}$.
    The same holds for singular values and arbitrary~$n,m$.
    \item If $\rank(A)=r$, then $\rank(\comp_r(A))=1$. 
    \item For $\rank(A)<k\leq \min\{n,m\}$, $\comp_k(A)=0$.
    \item If $A$ is diagonalizable, and~$u_1,\dots,u_n \in \bR^n$ is a linearly independent set of eigenvectors of~$A$, then $\{\comp_k(\begin{bmatrix}u_1 & \cdots & u_k\end{bmatrix}) : I \in \cI_{n,k}\}$ is a linearly independent set of $\binom{n}{k}$ dimensional eigenvectors of~$\comp_k(A)$. In particular,~$\comp_k(A)$ is diagonalizable.
    \item If $n=m$ and $n>1$ then the eigenvalue property in item $3$ above implies $\det (\comp_k(A)) = (\det (A))^{\binom{n-1}{k-1}}$. This is also known as the \textit{Sylvester-Franke theorem}.
\end{enumerate}
\end{lem}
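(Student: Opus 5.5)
The plan is to derive every item from two basic tools: the Cauchy--Binet formula \eqref{eq:cauchybinet}, and the behaviour of $\comp_k$ on triangular and diagonal matrices. First we record that $\comp_k(I_n)=I_{\binom{n}{k}}$, since the only nonzero $k\times k$ minors of the identity are the principal ones, each equal to $1$.

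\textbf{Items 1 and 2.} If $A$ is invertible, \eqref{eq:cauchybinet} applied to $AA^{-1}=I$ gives $\comp_k(A)\comp_k(A^{-1})=I$, which proves invertibility of $\comp_k(A)$ together with the inverse formula. For the converse, we argue contrapositively. If $A$ is singular, then $\rank(A)<n$. Writing $A=BC$ with $B\in\bR^{n\times r}$, $C\in\bR^{r\times n}$ and $r=\rank(A)$, \eqref{eq:cauchybinet} factors $\comp_k(A)$ through a space of dimension $\binom{r}{k}<\binom{n}{k}$, so $\comp_k(A)$ is singular. (When $r<k$, $\comp_k(A)=0$ outright, which is item 5.) Item 2 is immediate from multilinearity of the determinant in its $k$ rows.

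\textbf{Items 3, 6 and 7.} We use the Schur decomposition over $\mathbb C$: $A=UTU^{-1}$ with $T$ upper triangular. \eqref{eq:cauchybinet} holds verbatim over $\mathbb C$, so $\comp_k(A)=\comp_k(U)\comp_k(T)\comp_k(U)^{-1}$. The key check is that $\comp_k(T)$ is upper triangular in the lexicographic order, with diagonal entries $\prod_{i\in I}t_{ii}$. To see this, take $I=(i_1<\dots<i_k)$ and $J=(j_1<\dots<j_k)$. If $T_{I,J}$ is nonsingular, some permutation must have all entries nonzero, which forces $i_s\le j_s$ for every $s$; this in turn means $I\le J$ lexicographically. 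This index bookkeeping is the step that needs the most care.

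For singular values, we compose SVDs $A=U\Sigma V^\top$. Cauchy--Binet gives $\comp_k(U)$ orthogonal, and $\comp_k(\Sigma)$ is a rectangular diagonal matrix whose entries are $\prod_{i\in I}\sigma_i$.

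Item 6 follows from the same argument with $T$ diagonal, i.e.\ $A=UDU^{-1}$. Then $\comp_k(D)$ is diagonal, and the columns of $\comp_k(U)$ are the eigenvectors $\comp_k(U_{:,I})$, $I\in\cI_{n,k}$. They are independent because $\comp_k(U)$ is invertible by item 1.

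For item 7, take the product of the eigenvalues from item 3. Each index $i$ belongs to exactly $\binom{n-1}{k-1}$ of the $k$-subsets, so $\det(\comp_k(A))=\prod_i\lambda_i^{\binom{n-1}{k-1}}=\det(A)^{\binom{n-1}{k-1}}$.

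\textbf{Items 4 and 5.} Write $A=BC$ with $B\in\bR^{n\times r}$ and $C\in\bR^{r\times m}$, where $r=\rank(A)$. Then \eqref{eq:cauchybinet} gives $\comp_r(A)=\comp_r(B)\comp_r(C)$, which is a column vector times a row vector. Both factors are nonzero, since $B$ has a nonsingular $r\times r$ submatrix and so does $C$; hence $\comp_r(A)$ has rank $1$. For $k>r$, every $k$ rows of $A$ are linearly dependent, so every $k\times k$ minor vanishes and $\comp_k(A)=0$.
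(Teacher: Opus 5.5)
Your proof is correct. The paper gives no argument of its own for this lemma: it is presented as a summary of standard facts, with pointers to Horn--Johnson (Subsection 0.8.1) and Karlin (Subsection 0.iv). So there is no in-paper route to match.

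What you wrote is the usual textbook route, made self-contained. You reduce the algebraic items to the Cauchy--Binet formula \eqref{eq:cauchybinet}: the inverse formula, the factorization $A=BC$ for the rank statements, and orthogonality of $\comp_k(U)$. You reduce the spectral items 3, 6 and 7 to triangularization or diagonalization. Citing keeps the paper short. Your version removes the dependence on the references and shows that the only genuinely combinatorial input is that $\comp_k(T)$ is triangular in lexicographic order.

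Two points should be tightened.

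\textbf{The triangularity step.} You assert, without proof, that ``some permutation has all entries nonzero $\Rightarrow$ $i_s\le j_s$ for all $s$.'' The claim is true, and a direct proof is short. Suppose $i_s>j_s$ for some $s$. For $t\ge s$ and $q\le s$ we have $i_t\ge i_s>j_s\ge j_q$, so $T_{i_t,j_q}=0$. Hence the $k-s+1$ rows $i_s,\dots,i_k$ of $T_{I,J}$ are supported on only the $k-s$ columns $j_{s+1},\dots,j_k$. They are therefore linearly dependent, and $\det T_{I,J}=0$.

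\textbf{Singular values when $n\neq m$.} Here $\comp_k(\Sigma)$ is not literally rectangular diagonal under the lexicographic indexing. For example, with $n=3$, $m=4$, $k=2$, the nonzero entry for $I=(2,3)$ sits in row $3$, column $4$. What is true is that $\comp_k(\Sigma)$ has at most one nonzero entry, equal to $\prod_{i\in I}\sigma_i$, in each row and column. It therefore becomes rectangular diagonal after row and column permutations, which can be absorbed into the orthogonal factors $\comp_k(U)$ and $\comp_k(V)$. The conclusion is unchanged.
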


An immediate consequence of item 3 of \cref{thm:compoundProperty} is the following identity between the rank of a matrix and its compound.
\begin{lem}\label{thm:compound_rank}
    Let~$A \in \bR^{n \times m}$. For every~$1 \le k \le \rank(A)$,
    \[
        \rank(\comp_k(A)) = \binom{\rank(A)}{k}.
    \]
\end{lem}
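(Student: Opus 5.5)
The idea is to use the singular value statement in item~3 of \cref{thm:compoundProperty}. The rank of a matrix equals its number of nonzero singular values, so it suffices to count the nonzero singular values of $\comp_k(A)$.

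Let $r=\rank(A)$ and $p=\min\{n,m\}$. Let $\sigma_1\ge\cdots\ge\sigma_p\ge 0$ be the singular values of $A$, so that exactly $\sigma_1,\dots,\sigma_r$ are positive. By item~3 of \cref{thm:compoundProperty}, the singular values of $\comp_k(A)$ are the products $\prod_{i\in I}\sigma_i$ for $I\in\cI_{p,k}$, together with additional zeros when the dimensions of $\comp_k(A)$ exceed $\binom{p}{k}$. A product $\prod_{i\in I}\sigma_i$ is nonzero if and only if $I\subset(1:r)$. Hence the number of nonzero singular values of $\comp_k(A)$ is $|\cI_{r,k}|=\binom{r}{k}$, and this is its rank.

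If one prefers not to rely on the rectangular version of item~3, the same count follows from the multiplicativity \eqref{eq:cauchybinet}. Write the SVD $A=U\Sigma V^\top$, so that $\comp_k(A)=\comp_k(U)\comp_k(\Sigma)\comp_k(V)^\top$. The factors $\comp_k(U)$ and $\comp_k(V)$ are invertible by item~1 of \cref{thm:compoundProperty}, since $U$ and $V$ are invertible. Therefore $\rank\comp_k(A)=\rank\comp_k(\Sigma)$. Now $\Sigma$ is a rectangular diagonal matrix, so a $k\times k$ minor $\det(\Sigma_{I,J})$ can be nonzero only when $I=J$. In that case it equals $\prod_{i\in I}\sigma_i$, which is nonzero exactly when $I\subset(1:r)$. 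So $\comp_k(\Sigma)$ is again a (rectangular) diagonal matrix with exactly $\binom{r}{k}$ nonzero entries, and its rank is $\binom{r}{k}$.

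The only point that needs care is justifying that item~3 applies to rectangular $A$, or equivalently that the diagonal structure of $\comp_k(\Sigma)$ is preserved under the lexicographic indexing. The direct minor argument above settles this. The hypothesis $k\le\rank(A)$ is used only to ensure $\binom{r}{k}\ge1$; for $k>r$ the same computation gives $0$, which is consistent with item~5.
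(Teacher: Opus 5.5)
Your first argument is the paper's own: the lemma is presented there as an immediate consequence of item 3 of \cref{thm:compoundProperty}, obtained by counting the nonzero products $\prod_{i\in I}\sigma_i$, which are exactly those with $I\subset(1:r)$. In your alternative SVD route there is one inaccuracy to fix: for $n\neq m$ the nonzero entries $\det(\Sigma_{I,I})$ of $\comp_k(\Sigma)$ do not sit on its literal diagonal under lexicographic indexing (e.g.\ $(2,3)$ is the 3rd element of $\cI_{3,2}$ but the 4th of $\cI_{4,2}$), so $\comp_k(\Sigma)$ is not rectangular diagonal; however, these entries still lie in distinct rows and distinct columns, so the rank count $\binom{r}{k}$ is unaffected.
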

The following result from \cite[Subsection 0.8.2]{hornMatrix} gives a closed-form formula to compute the preimage $A$ of a given adjugate matrix $\adj(A)$.
\begin{lem}
\label{thm:adj2}
    Let $A\in\bR^{n\times n}$ and $M=\adj(A)$, then $\adj (M)=(\det (M)) ^\frac{n-2}{n-1} A$.
\end{lem}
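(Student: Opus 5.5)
The plan is to prove the polynomial identity $\adj(\adj(A)) = (\det A)^{n-2} A$ for every $A\in\bR^{n\times n}$ (with $n\ge 2$), and then rewrite the scalar factor in terms of $\det(M)$.

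\emph{Step 1: invertible $A$.} Suppose $A$ is invertible. The defining relation $A\,\adj(A) = \adj(A)\,A = (\det A) I_n$ gives $\adj(A) = (\det A)A^{-1}$. Taking determinants yields $\det(M) = (\det A)^n \det(A^{-1}) = (\det A)^{n-1}$. In particular $M$ is invertible, so the same relation applied to $M$ gives
\[
\adj(M) = (\det M) M^{-1} = (\det A)^{n-1}\,\bigl((\det A)A^{-1}\bigr)^{-1} = (\det A)^{n-2} A .
\]

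\emph{Step 2: singular $A$.} Every entry of $\adj(\adj(A))$ is a polynomial in the entries of $A$, and so is every entry of $(\det A)^{n-2}A$. These two polynomials agree on the set of invertible matrices, which is dense in $\bR^{n\times n}$, so they agree everywhere. Alternatively, one could perturb to $A+\varepsilon I$ and let $\varepsilon\to 0$. Either way, $\adj(M) = (\det A)^{n-2}A$ and $\det(M) = (\det A)^{n-1}$ hold for all $A$. (If $A$ is singular and $n>2$, both sides are simply zero.)

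\emph{Step 3: expressing the factor through $M$.} This is the only delicate point. Since $\det(M) = (\det A)^{n-1}$, we get $(\det A)^{n-2} = \bigl((\det A)^{n-1}\bigr)^{\frac{n-2}{n-1}} = (\det M)^{\frac{n-2}{n-1}}$, provided the fractional power is taken as the appropriate real $(n-1)$th root.
\begin{itemize}
\item When $n-1$ is odd, the real $(n-1)$th root is unique and the identity holds verbatim.
\item When $n-1$ is even, $\det M\ge 0$ and the principal root recovers $|\det A|^{n-2}$. Since $n-2$ is then odd, this equals $(\det A)^{n-2}$ only up to the sign of $\det A$.
\end{itemize}

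I would therefore state the result carefully as $\adj(M) = (\det A)^{n-2}A$, with $(\det M)^{\frac{n-2}{n-1}}$ understood as $(\det A)^{n-2}$. Equivalently, the identity determines $A$ from $M$ up to the factor $\pm 1$ when $n$ is odd. This sign ambiguity is consistent with the "up to an overall sign" uniqueness claimed in the abstract. It is the step requiring care, whereas Steps 1 and 2 are routine.
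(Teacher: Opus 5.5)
Your argument is correct. The paper gives no proof of this lemma and only points to Horn and Johnson (Subsection 0.8.2). Your Steps 1 and 2 are a standard self-contained derivation of the underlying identity $\adj(\adj A)=(\det A)^{n-2}A$: you treat the invertible case via $\adj(A)=(\det A)A^{-1}$ and $\det(\adj A)=(\det A)^{n-1}$, then extend to all $A$ because both sides are polynomial in the entries and invertible matrices are dense.

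Your Step 3 caveat identifies a genuine flaw in the statement as written. For odd $n$ and $\det A<0$, the principal root gives $(\det M)^{\frac{n-2}{n-1}}=|\det A|^{n-2}=-(\det A)^{n-2}$, so the claimed formula is off by a sign. Concretely, $A=-I_3$ gives $M=I_3$ and $\adj(M)=I_3$, whereas $(\det M)^{1/2}A=-I_3$. The statement also tacitly needs $n\ge 2$, which you assumed.

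The correct form is yours, $\adj(M)=(\det A)^{n-2}A$. The discrepancy is harmless downstream only because \cref{thm:closedformInverse} claims merely $A=\pm B$ for odd $n$, which absorbs exactly this sign.
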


Since $\adj(A)$ is computed by $(n-1)$-th order minors of $A$, we further get the following reciprocal relation between the adjugate matrix $\adj(A)$ of $A$ and $\comp_{n-1}(A)$. 
\begin{lem}
\label{thm:adjcomp_bijection}
For $A\in \bR^{n\times n}$, $n > 1$, the following holds:
    \[\adj(\adj(A)) = \comp_{n-1}(\comp_{n-1}(A))\]
\end{lem}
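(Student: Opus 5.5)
The plan is to express $\adj(A)$ directly in terms of $\comp_{n-1}(A)$ through a fixed signed permutation matrix, and then use the multiplicativity \eqref{eq:cauchybinet} to transport that relation through a second application of the operators.

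\textbf{Step 1: locate the minors.} First I would record how $\comp_{n-1}(A)$ is indexed. The $i$th element of $\cI_{n,n-1}$ in lexicographic order is $[n]\setminus\{n+1-i\}$. Hence
\[
\comp_{n-1}(A)_{ij}=\det\bigl(A_{[n]\setminus\{n+1-i\},[n]\setminus\{n+1-j\}}\bigr).
\]

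\textbf{Step 2: a signed-permutation identity.} Let $P$ be the reversal permutation matrix, $D=\diag\bigl((-1)^1,\dots,(-1)^n\bigr)$, and $S:=DP$. Row $i$ of $S$ has the single nonzero entry $(-1)^i$, in column $n+1-i$. Therefore
\[
(S\,\comp_{n-1}(A)^T S^T)_{ij}=(-1)^{i+j}\comp_{n-1}(A)_{n+1-j,\,n+1-i}.
\]
By Step 1 this is $(-1)^{i+j}$ times the minor of $A$ with row $j$ and column $i$ deleted, which is $\adj(A)_{ij}$. So
\begin{equation*}
\adj(X)=S\,\comp_{n-1}(X)^T S^T \quad\text{for every } X\in\bR^{n\times n}.
\end{equation*}

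\textbf{Step 3: apply the identity twice.} Set $C:=\comp_{n-1}(A)$. Applying the identity to $X=\adj(A)$, and using Cauchy--Binet \eqref{eq:cauchybinet} together with $\comp_k(X^T)=\comp_k(X)^T$ (immediate from the definition), gives
\[
\adj(\adj(A))=S\,\comp_{n-1}(S\,C^T S^T)^T S^T=\bigl(S\,\comp_{n-1}(S)\bigr)\,\comp_{n-1}(C)\,\bigl(S\,\comp_{n-1}(S)\bigr)^T .
\]
It therefore suffices to show that $S\,\comp_{n-1}(S)=\pm I$; the sign then cancels between the two outer factors.

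\textbf{Step 4: compute $\comp_{n-1}(S)$.} I would not compute this entrywise. Instead, apply the identity of Step 2 to $X=S$ and use $\adj(S)=\det(S)S^{-1}=\det(S)S^T$, since $S$ is orthogonal. This gives $\comp_{n-1}(S)^T=\det(S)\,S^T$, i.e. $\comp_{n-1}(S)=\det(S)\,S$.

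Next, $S^2=D(PDP)$, and $PDP=\diag\bigl((-1)^{n+1-i}\bigr)=(-1)^{n+1}D$. Hence $S^2=(-1)^{n+1}I$, and so
\[
S\,\comp_{n-1}(S)=\det(S)(-1)^{n+1}I=\pm I.
\]
Substituting this into Step 3 yields $\adj(\adj(A))=\comp_{n-1}(\comp_{n-1}(A))$.

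The main obstacle is purely the index and sign bookkeeping in Step 2: matching the lexicographic order of $\cI_{n,n-1}$ with the reversal $P$, and the cofactor signs with $D$. Once $\adj(X)=S\,\comp_{n-1}(X)^TS^T$ is verified, the rest is formal. The hypothesis $n>1$ is needed only so that $\comp_{n-1}$ is a genuine compound and Cauchy--Binet applies with $k=n-1\ge 1$.
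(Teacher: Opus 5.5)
Your proof is correct and follows the paper's route. Both write $\adj(X)=T\,\comp_{n-1}(X)^\top T^\top$ for the signed reversal $T=DP$ (the paper's $SP$), apply this identity to $\adj(A)$ via Cauchy--Binet, and cancel the outer factors using $T^2=(-1)^{n+1}I$. The differences are minor: you obtain $\comp_{n-1}(T)=\det(T)\,T$ from the identity itself together with $\adj(T)=\det(T)T^{-1}$, rather than computing the compounds of the diagonal and reversal factors separately. Also, your indexing $[n]\setminus\{n+1-i\}$ is the correct one; the paper's appendix writes $n-i$.
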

A proof of \cref{thm:adjcomp_bijection} can be found in \cref{app:proofs}.

\subsection{Exterior Algebra and Compound}
\label{sec:extAlgebra}
The $k$th exterior power $V^{(k)}$ of an $n$-dimensional vector space $V$ is a vector space of dimension $\binom{n}{k}$ spanned by vectors of the form,
\[
z = u_1\wedge u_2\wedge \cdots \wedge u_k \quad (u_i\in V)
\]
where the product above is called an exterior or wedge product~\cite{winitzki2023linear}. 
In this paper, we focus on $V = \bR^n$. Given the \emph{canonical basis} $\{e_1,\ldots,e_n\}$ of $\bR^n$, the set
\[
\{e_{i_1} \wedge \cdots \wedge e_{i_k} : (i_1,\dots,i_k) \in \cI_{n,k}\}
\]
is a basis of the $k$th exterior power $V^{(k)}$, where each basis vector corresponds to a $k$-dimensional oriented subspace. Let $U = [u_1, u_2,\ldots, u_n]\in \bR^{n\times n}$, $u_i\in \bR^n$, then the $i$th coordinate of $z$, as defined above, will be given by the \textit{Grassmann notation} as
\begin{align}
\label{eq:grassman}
z_{I = (i_1,\ldots,i_k)} = \det (U_{\{i_1,\ldots,i_k\},1:k})
\end{align}
where $I$ is the $i$th element of $\cI_{n,k}$. Thus, $\comp_k(U)\in\bR^{\binom{n}{k}\times \binom{n}{k}}$ is a matrix with each column representing the coordinates of $k$-wedge $u_{i_1}\wedge\ldots \wedge u_{i_k}$ for all $(i_1,\ldots,i_k)\in \cI_{n,k}$, lexicographically ordered.
The two primary properties \cite[Section $0.iii$]{karlinTP} of the wedge product are:
\begin{enumerate}
    \item multi-linearity, i.e., $(\alpha u_1 + \beta u_2)\wedge u_3 = \alpha(u_1\wedge u_3) + \beta (u_2\wedge u_3)$;
    \item anti-symmetry, i.e., $u_1\wedge u_2 = - u_2\wedge u_1$, giving us $u\wedge u = 0$ for any $u\in \bR^n$.
\end{enumerate}
By the Cauchy-Binet formula, $\comp_k(A)$ can also be seen as a coordinate representation of 
\begin{equation*}
    \comp_k(A)(x_1\wedge\cdots\wedge x_k) = Ax_1 \wedge \cdots \wedge Ax_k
\end{equation*}
for all $x_i\in\bR^n$ (see~\cite[Section~0.iii]{karlinTP}). Wedge products are also closely related to linear independence, since $u_1 \wedge \cdots \wedge u_k = 0$ if and only if $u_1,\dots,u_k \in \bR^n$ are linearly dependent.

A vector~$z \in V^{(k)}$ is called~\emph{decomposable} if it can be written as the wedge product of~$k$ vectors in~$V$, i.e. if~$z = u_1 \wedge \cdots \wedge u_k$~\cite{Harris1992}. A decomposable vector does not have a unique decomposition, but if a vector is non-zero and decomposable, then all decompositions span the same $k$-dimensional subspace in~$V$. To be precise, suppose~$z \in V^{(k)}$ is decomposable and non-zero, then if~$u_1,\dots,u_k,v_1,\dots,v_k \in V$ are such that
\begin{equation*}
    z = u_1 \wedge \cdots \wedge u_k = v_1 \wedge \cdots \wedge v_k,
\end{equation*}
then
\begin{equation*}
    \spanop\{u_1,\dots,u_k\} = \spanop\{v_1,\dots,v_k\}.
\end{equation*}
This follows from the fact that~$x \wedge z = 0$ if and only if~$x$ is in the span of any decomposition of~$z$.

%% file: main_result.tex
\section{Main Results}
The main objective of this study is to compute the preimage of the multiplicative compound. To be precise, for fixed~$n,m$ and $k\le\min\{n,m\}$, we call a matrix~$M \in \bR^{\binom{n}{k} \times \binom{m}{k}}$ \emph{compound-decomposable} if there exists a matrix~$A \in \bR^{n \times m}$ such that~$M = \comp_k(A)$. We also call $A$ an \emph{inverse-compound} of $M$. Our goal is to find an inverse-compound of a given compound-decomposable matrix. We begin by posing the question:
\begin{center} \textit{Does a compound-decomposable matrix always have a unique inverse-compound matrix?} 
\end{center}
If~$k$ is even, then by item 2 of \cref{thm:compoundProperty}, $C_k(-A) = (-1)^{k} C_k(A) = C_k(A)$, i.e., for even~$k$ uniqueness is at most up to an overall sign. The following \cref{prop:compound_nonunique} shows that, in some cases, there are infinitely many matrices that all have the same compound.
\begin{prop}\label{prop:compound_nonunique}
    Let~$A,B \in \bR^{n \times m}$, and $r = \rank(A)$. Then, the following hold:
    \begin{enumerate}
        \item if $k > r$, then $\comp_k(A) = \comp_k(B) \iff \rank(B) < k$.
        \item if $k = r$ and $A = U V^\top$, with~$U\in \bR^{n\times r},V \in \bR^{m \times r}$, be a rank-revealing factorization of~$A$, then~$\comp_k(A) = \comp_k(B)$ if and only if
        \begin{equation}\label{eq:rank_compound_preimage}
            B \in \{UTV^\top : T \in \bR^{r \times r} \text{ with } \det(T) = 1\}.
        \end{equation}
    \end{enumerate}
\end{prop}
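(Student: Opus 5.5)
Item 1 follows directly from item 5 of \cref{thm:compoundProperty}. Since $r<k$, we have $\comp_k(A)=0$. Hence $\comp_k(B)=\comp_k(A)$ if and only if $\comp_k(B)=0$. If $\rank(B)<k$, then $\comp_k(B)=0$ by the same item. Conversely, if $\rank(B)\ge k$, then \cref{thm:compound_rank} gives $\rank(\comp_k(B))=\binom{\rank(B)}{k}\ge 1$, so $\comp_k(B)\neq 0$. This settles item 1.

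For item 2, sufficiency is a direct application of the Cauchy--Binet formula~\eqref{eq:cauchybinet}. Take $B=UTV^\top$ with $\det T=1$. Since $k=r$, we have $\comp_r(T)=\det T=1$, and therefore
\[
\comp_r(B)=\comp_r(U)\comp_r(T)\comp_r(V^\top)=\comp_r(U)\comp_r(V^\top)=\comp_r(A).
\]
The dimension condition $k\le\min\{n,m,r\}$ holds in both factorizations.

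For necessity, assume $\comp_r(B)=\comp_r(A)$. By \cref{thm:compound_rank}, $\comp_r(A)$ has rank one, so $\comp_r(B)\neq 0$ and $\rank(B)\ge r$. If $\rank(B)>r$, then $\comp_r(B)$ would have rank $\binom{\rank(B)}{r}>1$, which is impossible, so $\rank(B)=r$.

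The main step is to show that $B$ has the same column space and row space as $A$. Write $\comp_r(A)=\comp_r(U)\comp_r(V)^\top$. Here $\comp_r(U)$ is the single column $u_1\wedge\cdots\wedge u_r$, which is nonzero, and similarly for $V$. Hence each nonzero column of $\comp_r(A)$ is a nonzero multiple of $u_1\wedge\cdots\wedge u_r$. Factor $B=WZ^\top$ with $W\in\bR^{n\times r}$ and $Z\in\bR^{m\times r}$ of full column rank. Then $\comp_r(B)=(w_1\wedge\cdots\wedge w_r)(z_1\wedge\cdots\wedge z_r)^\top$. Equality of these two rank-one matrices forces $w_1\wedge\cdots\wedge w_r=c\,u_1\wedge\cdots\wedge u_r$ for some $c\neq0$. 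Rescaling one $w_i$ by $1/c$ gives a decomposition of the same decomposable vector. By the subspace uniqueness fact from \cref{sec:extAlgebra}, $\spanop\{w_i\}=\spanop\{u_i\}$. The same argument on the row side gives $\spanop\{z_i\}=\spanop\{v_i\}$.

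It follows that $W=UP$ and $Z=VQ$ for invertible $P,Q\in\bR^{r\times r}$, so $B=UTV^\top$ with $T=PQ^\top$. Applying Cauchy--Binet again gives
\[
\comp_r(B)=\det(T)\,\comp_r(U)\comp_r(V)^\top=\det(T)\,\comp_r(A).
\]
Since $\comp_r(A)\neq0$, this forces $\det T=1$, which completes item 2.

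The only step needing care is the subspace identification. To handle it cleanly, compare a single nonzero entry of the outer products to fix the scalar $c$, then invoke the span statement. The rest is bookkeeping.
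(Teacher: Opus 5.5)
Your proposal is correct and follows essentially the same route as the paper. Because $\comp_r(A)$ has rank one, the $r$-wedges of the two factorizations are proportional; decomposability then identifies the column and row spaces, and Cauchy--Binet forces $\det T = 1$. The small differences are harmless and slightly tidier: you conclude $\det T=1$ directly from $\comp_r(B)=\det(T)\,\comp_r(A)$ instead of tracking $\det T_1 = c$ and $\det T_2 = c^{-1}$, and you prove the converse direction of item 1 explicitly via \cref{thm:compound_rank}.
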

\begin{rem}
    \cref{prop:compound_nonunique} can also be stated in terms of compound-decomposable matrices as follows. Let~$M \in \bR^{\binom{n}{k} \times \binom{m}{k}}$ be compound-decomposable. If~$\rank(M) = 0$, then~$\comp_k(A) = M$ if and only if~$\rank(A) < k$. If~$\rank(M) = 1$, then there exist infinitely many matrices whose~$k$th multiplicative compound equals~$M$.
\end{rem}
\begin{rem}
    When~$k=1$,~$C_k(A) = A$ so clearly~$C_k(A) = C_k(B) \iff A = B$. This is consistent with \cref{prop:compound_nonunique}. If~$1 = k > r$, then~$A = 0$ and~$\rank(B) < k$ similarly implies that~$B = 0$. If~$1 = k = r$, the set in~\eqref{eq:rank_compound_preimage} is singleton, since~$T \in \bR^{1 \times 1} = \bR$ with~$\det(T) = 1$ implies that~$T=1$.
\end{rem}
\begin{proof}
    If~$k > r$, then all minors of order~$k$ of~$A$ are zero, so~$C_k(A) = 0$. The same is also true for every~$B \in \bR^{n \times m}$ with~$\rank(B) < k$.
    
    Suppose now that~$k = r$. We first show that for every rank-revealing factorization of~$A$, the set in~\eqref{eq:rank_compound_preimage} remains the same. Let~$A = U V^\top = \tilde U \tilde V^\top$, with~$U,\tilde U \in \bR^{n\times r}$ and $V ,\tilde V \in \bR^{m \times r}$ full column rank matrices, be two rank-revealing factorizations of~$A$. Theorem 2 of~\cite{Piziak1999FullRank} asserts that there exists an invertible matrix~$R \in \bR^{r \times r}$ such that~$\tilde U = UR$ and~$\tilde V^\top = R^{-1} V^\top$. Therefore,
    \begin{align*}
        \{\tilde U \tilde T \tilde V^\top : \tilde T \in \bR^{r \times r} \text{ with } \det(\tilde T) = 1\} &= \{UR\tilde TR^{-1}V^\top : \tilde T \in \bR^{r \times r} \text{ with } \det(\tilde T) = 1\} \\
            &= \{UTV^\top : T \in \bR^{r \times r} \text{ with } \det(T) = 1\},
    \end{align*}
    where the second equality uses the fact that~$\det(R\tilde TR^{-1}) = \det(R)\det(R^{-1})\det(\tilde T) = \det(\tilde T)$ for every~$\tilde T \in \bR^{r \times r}$. Hence, it remains to show that~$B \in \bR^{n \times m}$ is such that~$\comp_r(B) = \comp_r(A)$ if and only if
    \[
        B \in \cS = \{UTV^\top : T \in \bR^{r \times r} \text{ with } \det(T) = 1\}.
    \]
    Suppose~$B \in \cS$, then by the multiplicativity of the compound
    \[
        \comp_r(B) = \comp_r(UTV^\top) = \comp_r(U)\det(T)\comp_r(V^\top) = \comp_r(UV^\top) = \comp_r(A),
    \]
    where the second equality uses the fact that~$\comp_r(T) = \det(T)$ since~$T \in \mathds{R}^{r \times r}$.

    Conversely, let~$\comp_r(B) = \comp_r(A)$. By \cref{thm:compound_rank},~$\rank(B)$ is such that~$\binom{\rank(B)}{r}=\binom{\rank(B)}{\rank(A)}=1$, i.e., $\rank(B) = \rank(A) = r$. Let~$B = U_BV_B^\top$, where~$U_B\in \bR^{n\times r},V_B \in \bR^{m \times r}$ have full column rank, be a rank-revealing factorization of~$B$. Note that~$\comp_r(U),\comp_r(U_B),\comp_r(V),\comp_r(V_B)$ are all column vectors. Furthermore, multiplicativity of the compound implies that
    \begin{equation}
        \comp_r(A) = \comp_r(U) \comp_r(V)^\top = \comp_r(U_B) \comp_r(V_B)^\top.
    \end{equation}
    So the above are rank-revealing factorizations of~$\comp_r(A)$.
    Since~$\comp_r(A)$ has rank 1, its rank-revealing factorization is unique up to scaling, and so
    \[
        \comp_r(U_B) = c \comp_r(U) \quad \text{and} \quad \comp_r(V_B) = c^{-1}\comp_r(V),
    \]
    for some non-zero~$c \in \bR$.
    The vector~$\comp_r(U) \in \bR^{\binom{n}{r}}$ is a non-zero decomposable column vector, so $\spanop(U) = \spanop(U_B)$ and hence, there exists an invertible matrix~$T_1 \in \bR^{r \times r}$ such that
    \[
        U_B = UT_1.
    \]
    However, since~$\comp_r(U_B) = c\comp_r(U)$, it follows that~$\det(T_1) = c$. Applying the same arguments to~$V_B$ implies that~$V_B = V T_2$ for some invertible~$T_2 \in \bR^{r \times r}$ with~$\det(T_2) = c^{-1}$ and we conclude that
    \[
        B = U_B V_B^\top = U T_1T_2^\top V^\top.
    \]
    Since~$\det(T_1T_2^\top) = 1$, it follows that~$B \in \cS$, which completes the proof.
\end{proof}
The following illustrates \cref{prop:compound_nonunique}. 
\begin{ex}
Consider the following rank $2$ matrices:
\begin{align*}
A =
\begin{pmatrix*}[r]
    1 & 0 & 1\\
    0 & 1 & 0\\
    0 & 1 & 0
\end{pmatrix*}
\ \text{    and    }\ 
B =
\begin{pmatrix*}[r]
    1 & 1 & 1\\
    0 & 1 & 0\\
    0 & 1 & 0
\end{pmatrix*}
\end{align*}
For $k=\rank(A)=\rank(B)$, it holds that
\begin{equation}
    \comp_2(A) = \comp_2(B) = \begin{pmatrix*}[r]
        1 & 0 & -1\\
        1 & 0 & -1\\
        0 & 0 & \phantom{-}0
    \end{pmatrix*},
\end{equation}
which is why \Cref{prop:compound_nonunique} implies that for any rank-revealing factorization $A=UV^\top$, it must hold that $B=UTV^\top$ for some $T$ with $\det(T)=1$.
Indeed, for this example, one may choose
\begin{align*}
U = 
\begin{pmatrix*}
    1 & 0\\
    0 & 1\\
    0 & 1
\end{pmatrix*},
\quad
V =
\begin{pmatrix*}
    1 & 0\\
    0 & 1\\
    1 & 0
\end{pmatrix*},
\quad
T = 
\begin{pmatrix*}
    1 & 1\\
    0 & 1
\end{pmatrix*}.
\end{align*}
In contrast, for $k = 3 > \rank(A)$, $\comp_3(A) = \det(A) = 0$, which is trivially fulfilled for all matrices with rank at most $2$.
\end{ex}

We continue by turning our attention to finding an inverse-compound~$A$ for a prescribed compound-decomposable~$M$. Consider first the special case where~$k = n-1$ and~$M$ is invertible. Then,~$A$ is also invertible by item 1 of~\cref{thm:compoundProperty}, and a closed-form expression for all inverse-compounds can be derived by
combining item $5$ of \cref{thm:compoundProperty}, \cref{thm:adj2}, and \cref{thm:adjcomp_bijection} as follows.
\begin{cor}
\label{thm:closedformInverse}
    Let~$M \in \bR^{n \times n}$,~$n > 1$, be compound-decomposable and $\det(M) \neq 0$, and 
    \[
    B := \det (M)^{-\frac{n-2}{n-1}}\comp_{n-1}(M) \in \bR^{n \times n }.
    \]
    Then~$A \in \bR^{n \times n}$ is an inverse-compound of~$M$, i.e.,~$\comp_{n-1}(A) = M$, if and only if~$A=B$ when~$n$ is even and~$A = \pm B$ when~$n$ is odd.
\end{cor}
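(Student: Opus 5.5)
The plan is to express $\comp_{n-1}(M)$ as a scalar multiple of any inverse-compound $A$, and then use $\det(M)$ to pin down the scalar. Throughout I read $\det(M)^{-\frac{n-2}{n-1}}$ as $\bigl(\det(M)^{\frac{1}{n-1}}\bigr)^{-(n-2)}$, where $\det(M)^{\frac{1}{n-1}}$ is the real root. This is the unique real root when $n-1$ is odd, and the positive root when $n-1$ is even.

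\textbf{Step 1: every inverse-compound is invertible.} Since $M$ is compound-decomposable, fix any $A$ with $\comp_{n-1}(A)=M$. Because $\det(M)\neq 0$, item 1 of \cref{thm:compoundProperty} shows that $A$ is invertible.

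\textbf{Step 2: compute $\det(M)$ and $\comp_{n-1}(M)$ in terms of $A$.} By the Sylvester--Franke identity (item 7 of \cref{thm:compoundProperty}) with $k=n-1$, we have $\binom{n-1}{n-2}=n-1$, so
\[
\det(M)=\det(A)^{n-1}.
\]
Next, \cref{thm:adjcomp_bijection} gives $\comp_{n-1}(M)=\comp_{n-1}(\comp_{n-1}(A))=\adj(\adj(A))$. Apply \cref{thm:adj2} to the matrix $\adj(A)$, using $\det(\adj(A))=\det(A)^{n-1}$ for invertible $A$. Writing the power explicitly as $\det(A)^{(n-1)\cdot\frac{n-2}{n-1}}=\det(A)^{n-2}$, this yields
\[
\comp_{n-1}(M)=\det(A)^{n-2}A.
\]
To avoid any ambiguity in the fractional power of \cref{thm:adj2}, I would double-check this identity directly from $\adj(A)=\det(A)A^{-1}$. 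That gives
\[
\adj(\adj(A))=\det(\adj A)\,(\adj A)^{-1}=\det(A)^{n-1}\cdot \det(A)^{-1}A .
\]

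\textbf{Step 3: split on the parity of $n$.} Set $d:=\det(A)$, so $d^{n-1}=\det(M)$.
\begin{itemize}
\item If $n$ is even, then $n-1$ is odd, so $d=\det(M)^{\frac1{n-1}}$ is uniquely determined by $M$. Hence $A=d^{-(n-2)}\comp_{n-1}(M)=B$.
\item If $n$ is odd, then $n-1$ is even, which forces $\det(M)>0$ and $d=\pm\det(M)^{\frac1{n-1}}$. Since $n-2$ is odd, $d^{-(n-2)}=\pm\det(M)^{-\frac{n-2}{n-1}}$, and therefore $A=\pm B$.
\end{itemize}
This proves that every inverse-compound equals $B$ (for $n$ even) or $\pm B$ (for $n$ odd).

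\textbf{Step 4: the converse.} An inverse-compound exists by compound-decomposability, and Step 3 shows it equals $B$ or $-B$.
\begin{itemize}
\item For $n$ even, it must be $B$ itself, so $\comp_{n-1}(B)=M$.
\item For $n$ odd, item 2 of \cref{thm:compoundProperty} gives $\comp_{n-1}(-X)=(-1)^{n-1}\comp_{n-1}(X)=\comp_{n-1}(X)$, since $n-1$ is even. So whichever of $\pm B$ is an inverse-compound, both are.
\end{itemize}

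The main obstacle is the careful treatment of real fractional powers of possibly negative determinants. This concerns both the branch implicit in \cref{thm:adj2} and the definition of $B$. I handle it by working with the integer powers $d^{n-1}$ and $d^{n-2}$ throughout, as in Step 2, and taking roots only at the end according to the parity of $n$.
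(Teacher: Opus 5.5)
Your proof is correct and follows the same route as the paper's one-line derivation. You get invertibility from item 1, use Sylvester--Franke for $\det(M)=\det(A)^{n-1}$, and combine \cref{thm:adjcomp_bijection} with \cref{thm:adj2} to get $\comp_{n-1}(M)=\det(A)^{n-2}A$; a parity split and item 2 then give the converse. Your direct check of $\adj(\adj(A))=\det(A)^{n-2}A$ via $\adj(A)=\det(A)A^{-1}$ is a real improvement in rigor. \cref{thm:adj2}, read with the positive real root, is off by a sign when $n$ is odd and $\det(A)<0$, and that is exactly the sign ambiguity the $\pm$ in the odd case absorbs.
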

Unfortunately, for a general $A\in\bR^{n\times m}$ and $k\leq\min(m,n)$, we are unaware of any existing closed-form solution to $A$ given $M=\comp_k(A)$. In lieu of closed-form solutions, our main result is a constructive proof for the uniqueness, up to overall sign, of the inverse-compound of~$M$ when~$\rank(M) > 1$, and an algorithm to recover the set of all inverse-compounds of~$M$.
\begin{thm}
\label{thm:main}
    Let $A\in \bR^{n\times m}$, $p=\max\{m,n\}$,  and $r := \rank(A)$. Then, the following hold:
    \begin{enumerate}
        \item if $k<r$ and $B \in \bR^{n \times m}$ fulfills~$C_k(A) = C_k(B)$, then~$A = B$ if $k$ is odd and $A = \pm B$ if $k$ is even. Moreover, there exists an algorithm whose input is~$C_k(A)$ and returns as an output $c A$ for any $c\in\{-1,1\}$ fulfilling $c^k=1$. The time complexity of the algorithm is given by:
        \begin{enumerate}
            \item $\bigO{\binom{p}{k}^3 pk + \binom{r}{k}2^r}$ if $k\leq \lceil \rank(A)/2 \rceil$
            \item $\bigO{\binom{p}{k}^3 pk + \binom{r}{k} 2^r + \binom{r}{\lceil r/2\rceil}^2 p k^2 \log r}$ if $\lceil \rank(A)/2 \rceil < k < \rank(A)$
        \end{enumerate}
        \item For $k=r$, there exists an algorithm that determines an element from the set of inverse-compounds, as defined in \cref{prop:compound_nonunique}, in time complexity $\bigO{\binom{p}{k} p^2}$.
    \end{enumerate}
\end{thm}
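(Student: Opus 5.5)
The plan is to reduce everything to subspace recovery in exterior powers, and then to a small "core" problem on an $r\times r$ matrix.

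\emph{Step 1 (ranks and subspaces agree).} If $\comp_k(A)=\comp_k(B)$ with $k\le r$, then \cref{thm:compound_rank} gives $\binom{\rank(B)}{k}=\binom{r}{k}$, so $\rank(B)=r$. Take rank-revealing factorizations $A=UV^\top$ and $B=U_BV_B^\top$. By Cauchy--Binet,
\[
\comp_k(A)=\comp_k(U)\comp_k(V)^\top .
\]
Here $\comp_k(U)$ and $\comp_k(V)$ have full column rank $\binom{r}{k}$. Hence the column space of $\comp_k(A)$ equals $\Lambda^k(\operatorname{col}A)$, spanned by the wedges of $k$ columns of $U$.

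I then claim that $W:=\operatorname{col}A$ is determined by $\Lambda^kW$. Indeed, $W$ is the sum of the spans of the decomposable vectors lying in $\Lambda^kW$. Conversely, any nonzero decomposable $u_1\wedge\cdots\wedge u_k\in\Lambda^kW$ has all $u_i\in W$: contracting with a functional that annihilates $W$ kills $\Lambda^kW$, but it does not kill the wedge unless that functional annihilates every $u_i$. Applying this to both $A$ and $B$, and likewise to row spaces via $\comp_k(A)^\top$, gives $\operatorname{col}B=\operatorname{col}A$ and $\operatorname{row}B=\operatorname{row}A$. So $B=UTV^\top$ with $T\in\bR^{r\times r}$ invertible.

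\emph{Step 2 (core rigidity, part 1 uniqueness).} From Step 1,
\[
\comp_k(U)\comp_k(T)\comp_k(V)^\top=\comp_k(U)\comp_k(V)^\top ,
\]
and the outer factors have full column rank, so $\comp_k(T)=I$. By the action $\comp_k(T)(x_1\wedge\cdots\wedge x_k)=Tx_1\wedge\cdots\wedge Tx_k$, every decomposable vector is fixed. By the span uniqueness of decomposable vectors, $T$ therefore maps every $k$-dimensional subspace of $\bR^r$ onto itself.

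Since $k<r$, every line is an intersection of $k$-dimensional subspaces, so $T$ preserves every line. Hence $T=cI$, and then $\comp_k(T)=c^kI=I$ gives $c^k=1$. This yields $A=B$ for odd $k$ and $A=\pm B$ for even $k$. This is exactly where $k<r$ is needed; for $k=r$ one only gets $\det T=1$, recovering \cref{prop:compound_nonunique}.

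\emph{Step 3 (algorithms).} For part 2 ($k=r$), $M$ has rank one.
\begin{itemize}
\item Factor $M=ab^\top$ by reading off a nonzero row and column.
\item Decompose $a$ and $b$ as wedges. Pick an index $I$ with $a_I\neq 0$; the $r$ vectors $e_{i}\lrcorner$-type contractions of $a$ (equivalently, the Plücker coordinates $a_{(I\setminus\{i\})\cup\{j\}}$, $j\in[n]$) form a basis $U$ with $\comp_r(U)=a/a_I$. This costs about $\binom{p}{k}$ lookups times $p^2$ arithmetic.
\item Do the same for $V$ with $b$, and absorb the scalar into one column. The result lies in the set \eqref{eq:rank_compound_preimage}, giving the $\bigO{\binom{p}{k}p^2}$ bound.
\end{itemize}

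For part 1 ($k<r$):
\begin{itemize}
\item Compute $\operatorname{col}M$ and $\operatorname{row}M$ by an SVD, at cost $\bigO{\binom{p}{k}^3}$.
\item Extract $W=\operatorname{col}A$ from decomposable columns of $M$. Each nonzero column of $\comp_k(A)$ is $\comp_k(A_{:,J})$, which is decomposable, so its $k$-span is recovered as in part 2, and the spans are summed. The cost is $\bigO{\binom{p}{k}pk}$ per column, contributing the $\binom{p}{k}^3pk$ term.
\item Obtain $U,V$ this way and form the core $N:=\comp_k(U)^{+}M\,(\comp_k(V)^{+})^\top=\comp_k(T_0)$ with $A=UT_0V^\top$ and $T_0\in\bR^{r\times r}$ unknown.
\item Invert $\comp_k$ on the $r\times r$ invertible core. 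If $k\le\lceil r/2\rceil$, use multiplicative relations among the $k$-minors, e.g.\ $\comp_k$ of triangular/diagonal parts after an LU-type normalization. This gives a log-linear system for the magnitudes of diagonal-type entries, followed by a search over $2^r$ sign patterns against the $\binom{r}{k}$ entries of $N$, which produces the $\binom{r}{k}2^r$ term.
\item If $k>\lceil r/2\rceil$, first pass to complementary minors (Jacobi's identity relates $\comp_k(T_0)$ to $\comp_{r-k}(T_0^{-1})$ scaled by $\det T_0$, with $\det T_0$ available from \cref{thm:compoundProperty} item 7). Bringing the order down to at most $\lceil r/2\rceil$ should account for the extra $\binom{r}{\lceil r/2\rceil}^2pk^2\log r$ term.
\end{itemize}

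The main obstacle is this last part: an explicit, provably correct inversion of $\comp_k$ on the $r\times r$ core that hits the stated complexity. The uniqueness argument (Steps 1--2) is structurally clean, whereas the bookkeeping of the sign search and of the reduction to low order is where I expect the real work.
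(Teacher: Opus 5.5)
Your uniqueness argument (Steps~1--2) is correct, and it takes a genuinely different route from the paper. You show directly that $\operatorname{col}B=\operatorname{col}A$ and $\operatorname{row}B=\operatorname{row}A$, reduce to $B=UTV^\top$ with $\comp_k(T)=I$, and conclude $T=cI$: $T$ fixes every $k$-dimensional subspace, hence every line when $k<r$. The paper obtains uniqueness only as a by-product of its reconstruction. It preconditions generically to enforce \cref{ass:svdsval}, recovers signed singular vectors from the columns of $L,R$, and composes them to $cA$. Your route needs no distinct-singular-value assumption, does not depend on any algorithm, and shows transparently why $k=r$ degenerates to $\det T=1$. Your part~2 procedure is also sound: factor the rank-one $M$, read a basis off the Pl\"ucker coordinates $a_{(I\setminus\{i\})\cup\{j\}}/a_I$ (with the appropriate signs), and absorb the scalar into one column. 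It is a more explicit alternative to the paper's kernels of $M_u,M_v$.

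The gap is the algorithm in part~1. After reducing to the core $N=\comp_k(T_0)$ with $T_0\in\bR^{r\times r}$ invertible, inverting $\comp_k$ on $N$ is exactly the problem the theorem is about, and you give no procedure for it.

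\begin{itemize}
\item The LU idea only relocates the difficulty. If $T_0=L_0D_0U_0$, then $\comp_k(L_0)\comp_k(D_0)\comp_k(U_0)$ is indeed the $LDU$ factorization of $N$. But recovering $L_0,U_0$ from their compounds is again an inverse-compound problem, and pivoting is not addressed.
\item The Jacobi step for $k>\lceil r/2\rceil$ needs $\det T_0$, but Sylvester--Franke gives only $\det(T_0)^{\binom{r-1}{k-1}}$. When that exponent is even the sign is unknown. A wrong sign yields $-\comp_{r-k}(T_0^{-1})$, which for even $r-k$ is not compound-decomposable (by your own Step~2 argument). In any case this step only reduces to the same unsolved low-order problem.
\item The complexity terms are fitted to the statement rather than derived. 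For example, $\mathcal{O}(\binom{p}{k}pk)$ per column over $\binom{m}{k}$ columns gives $\binom{p}{k}^2pk$, not $\binom{p}{k}^3pk$. Likewise, a single complementation step produces no $\log r$ factor.
\end{itemize}

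In the paper these terms come from concrete steps, both forced because the SVD destroys the lexicographic labels of the columns. The $\log r$ factor comes from iterated pairwise intersections of the spans $\ker M_z$ of the columns of $L,R$ when $k>\lceil r/2\rceil$. The $\binom{r}{k}2^r$ term comes from the exhaustive sign search in \cref{alg:signadjustsvd}.

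Your own reduction actually makes the missing step easy, because the columns of $N$ keep their labels. Let $t_j$ denote the columns of $T_0$, so column $J$ of $N$ is $t_{j_1}\wedge\cdots\wedge t_{j_k}$.
\begin{itemize}
\item Recover $S_J=\operatorname{span}\{t_j:j\in J\}$ from each column, then get $\operatorname{span}\{t_j\}=\bigcap_{J\ni j}S_J$, which holds since $k<r$.
\item Write $T_0=\hat TD$ with unit columns $\hat t_j$ and unknown diagonal $D$, and compute $\comp_k(D)=\comp_k(\hat T)^{-1}N$.
\item As in \cref{thm:uniqueeval}, the signed ratios $d_p/d_q$ follow from two $k$-subsets of a common $(k+1)$-set. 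Then $d_1^k$ fixes $d_1$ up to $c$ with $c^k=1$.
\end{itemize}
A routine cost count should keep this within the stated bounds. As written, however, neither the part~1 algorithm nor its complexity is established.
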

Note that given $\comp_k(A)$, any of its inverse-compounds has the same rank due to \cref{thm:compound_rank}. In particular, the assumption $k < \rank(A)$ is equivalent to $\rank(M) > 1$.

\begin{rem}
    For the special case of a square matrix, where $\comp_k(A) \in \bR^{\binom{n}{k} \times \binom{n}{k}}$ is invertible and $k > n/2$, there is an alternative method based on \cite{dalin2023verifying} for computing an inverse compound with better time complexity than applying \cref{thm:main} directly. Let~$U \in \{-1,0,1\}^{\binom{n}{k} \times \binom{n}{k}}$ be the matrix from \cite[Definition 3]{dalin2023verifying}, which is orthonormal, i.e., ~$U^\top U = UU^\top = I$. Then, by~\cite[Theorem 8]{dalin2023verifying},
    \begin{equation}
        \comp_k(A)^\top U^\top \comp_{n-k}(A) U = \det(A) I,
    \end{equation}
    so
    \begin{equation}
        \left(\comp_{n-k}(A)\right)^{-1} = \det(A)^{-1} U \comp_k(A)^\top U^\top.
    \end{equation}
    Finally, since~$k > n/2$, i.e., $n-k < n/2$, computing an inverse-compound for~$\left(\comp_{n-k}(A)\right)^{-1}$ using \cref{thm:main} has better time complexity, and the matrix inverse of the resulting inverse-compound is an inverse-compound of~$\comp_k(A)$ by item 1 of \cref{thm:compoundProperty}.
\end{rem}
The algorithm in \cref{thm:main} is based on a recovery method that exploits the multiplicative property of compounds and the 
\emph{reduced SVD} (also known as compact SVD): for $A\in \bR^{n\times m}$ with $\rank(A)=r$
    \begin{equation}
    \label{eq:svd-decomp}
        A = V \Sigma W^\top \implies
        M := \comp_k(A) = \comp_k(V)\comp_k(\Sigma)\comp_k(W)^\top,
    \end{equation}
    where $\Sigma\in\bR^{r\times r}$ has positive real singular values of $A$ on its diagonal, and the corresponding left and right singular vectors are the columns of the orthonormal matrices $V\in\bR^{n\times r}$ and $W\in\bR^{m\times r}$, respectively.
The main idea is to recover the singular vectors (left and right) and singular values of an inverse-compound matrix $A$ from those of the given compound-decomposable matrix~$M$, which then allows us to compose $A$ as above. 
A detailed proof of \cref{thm:main} is deferred to \Cref{sec:proof_main}, following our discussion of all the involved components.

To begin with, we explain why it is important and, without loss of generality, possible to assume that the non‑zero singular values of $M$ are distinct.
\begin{rem}\label{rem:repeated_values}
    While every wedge product of $k$ linearly independent singular vectors of~$A$ is a singular vector of~$\comp_k(A)$, if~$\comp_k(A)$ has repeating singular values, then it will have singular vectors which are \emph{not} decomposable. This is because if~$u,v$ are linearly independent singular vectors to the same singular value, every orthonormal basis of~$\spanop\{u,v\}$ consists of singular vectors, which are generally not decomposable. \cref{ex:repeated_values} illustrates this fact. 
\end{rem}
\begin{ex}\label{ex:repeated_values}
For $M = I_6$, it follows from $I_6 = \comp_2(I_4)$ that $M$ is compound-decomposable. However, if $Q \in \bR^{6 \times 6}$ is an orthonormal matrix whose first column is
\[
    q = \frac{1}{\sqrt{2}}\begin{bmatrix}
        1 & 0 & 0 & 0 & 0 & 1
    \end{bmatrix}^\top,
\]
then $QI_6Q^\top$ is an SVD of~$I_6$, where $Q$ is not compound-decomposable, i.e, there is no matrix~$S \in \bR^{4 \times 4}$ such that~$\comp_2(S) = Q$. To see this, assume by contradiction that such a matrix does exist, and let~$u,v \in \bR^{4}$ be its first two columns, i.e., 
\[
    \comp_2(\begin{bmatrix} u & v \end{bmatrix}) = q.
\]
In particular, it must hold that
\[
    \det \left(\begin{bmatrix}
        u_1 & v_1 \\
        u_3 & v_3
    \end{bmatrix}\right) = 0,
\]
i.e., $\begin{bmatrix}u_1 & u_3\end{bmatrix}^\top$ and~$\begin{bmatrix}v_1 & v_3\end{bmatrix}^\top$ are linearly dependent. The same must be true for~$\begin{bmatrix}u_2 & u_3\end{bmatrix}^\top$ and~$\begin{bmatrix}v_2 & v_3\end{bmatrix}^\top$. However, this implies that~$\begin{bmatrix}u_1 & u_2\end{bmatrix}^\top$ and~$\begin{bmatrix}v_1 & v_2\end{bmatrix}^\top$ are linearly dependent and, therefore,
\[
    \det \left(\begin{bmatrix}
        u_1 & v_1 \\
        u_2 & v_2
    \end{bmatrix}\right) = 0 \neq q_1 = \frac{1}{\sqrt{2}},
\]
which is a contradiction.
\end{ex}
To avoid getting non-decomposable singular vectors of $M=\comp_k(A)$ by reduced SVD, it is sufficient to have $M$ with distinct positive singular values. In case $M$ does not satisfy the requirement, the following \cref{prop:preprocess-distinct} provides a preprocessing routine to ensure this property. 
    
\begin{prop}
\label{prop:preprocess-distinct}
    Let~$M \in \bR^{\binom{n}{k} \times \binom{m}{k}}$ be a compound-decomposable matrix. Then, for almost all~$Q \in \bR^{n \times n}$, the non-zero singular values of~$\comp_k(Q)M$ are distinct.
\end{prop}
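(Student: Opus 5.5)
Write $M=\comp_k(A)$ with $A\in\bR^{n\times m}$ and $r=\rank(A)$. If $r<k$, then $\comp_k(Q)M=0$ and there are no non-zero singular values, so the claim is vacuous. Assume from now on $r\ge k$. By Cauchy--Binet \eqref{eq:cauchybinet}, $\comp_k(Q)M=\comp_k(QA)$. The plan has three steps: reduce to the singular values of $QA$ via item 3 of \cref{thm:compoundProperty}; translate "distinct non-zero singular values" into the non-vanishing of a polynomial in the entries of $Q$; and exhibit one $Q$ at which that polynomial is non-zero. A non-zero polynomial vanishes only on a Lebesgue-null set, which gives "almost all $Q$".

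\textbf{Reduction to $QA$.} Let $\sigma_1,\dots,\sigma_r$ be the non-zero singular values of $QA$, for $Q$ invertible so that $\rank(QA)=r$. By item 3 of \cref{thm:compoundProperty}, the singular values of $\comp_k(QA)$ are the products $\prod_{i\in I}\sigma_i$ over $I\in\cI_{p,k}$, with $\sigma_i=0$ for $i>r$. The non-zero ones are exactly $\pi_I:=\prod_{i\in I}\sigma_i$ for $I\in\cI_{r,k}$. So it suffices that $\pi_I\neq\pi_J$ for all $I\neq J$ in $\cI_{r,k}$.

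\textbf{Algebraic formulation.} The squared products $\pi_I^2$ are the non-zero eigenvalues of $\comp_k(QA)^\top\comp_k(QA)$, whose entries are polynomials in $Q$. The non-zero eigenvalues are distinct exactly when the characteristic polynomial, after dividing out the maximal power of $\lambda$, has non-zero discriminant. To keep this polynomial in $Q$, I would fix the multiplicity of the zero eigenvalue, $\binom{m}{k}-\binom{r}{k}$, which holds for invertible $Q$. Then I take the coefficients $c_j(Q)$ of $\det(\lambda I-\comp_k(QA)^\top\comp_k(QA))=\lambda^{d}\sum_j c_j(Q)\lambda^j$, each polynomial in $Q$, and form the discriminant $D(Q)$ of the reduced polynomial. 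Now $D(Q)\neq0$ together with $\det Q\neq0$ implies the conclusion. Since $D\cdot\det$ is a polynomial, it suffices to find a single $Q$ where it is non-zero.

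\textbf{Witness $Q$ (main obstacle).} I expect this step to be the hard part. Take a reduced SVD $A=V\Sigma W^\top$ and extend $V$ to an orthogonal $\bar V\in\bR^{n\times n}$. Choose $Q=\bar V^{\mathrm{full}}\,\diag(d)\,\bar V^\top$ with $d\in\bR_{>0}^n$. Then $QA=\bar V\,\diag(d_{1:r})\Sigma W^\top$, which has singular values $d_i\sigma_i(A)$. We need positive numbers $s_i=d_i\sigma_i(A)$ whose $k$-subset products are pairwise distinct. Passing to logarithms, we need $\sum_{i\in I}\log s_i$ distinct for distinct $k$-subsets. Choosing $\log s_i=2^{i}$ works: subset sums of distinct powers of two are distinct, which is the binary representation. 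So $s_i=\exp(2^i)$ works, and $d_i=s_i/\sigma_i(A)$. This $Q$ is invertible and gives $D(Q)\neq0$.

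Hence $Q\mapsto D(Q)\det Q$ is a non-zero polynomial on $\bR^{n\times n}$. Its zero set has measure zero, and outside it the non-zero singular values of $\comp_k(Q)M$ are distinct.
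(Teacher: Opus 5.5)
Your proof is correct and is essentially the paper's argument: both express the discriminant of a characteristic polynomial as a polynomial in the entries of $Q$, and both show it is non-zero at a witness. That witness is built from a reduced SVD of $A$, rescaled diagonally so that all $k$-subset products of the singular values differ. The differences are cosmetic: the paper works with the $\binom{r}{k}\times\binom{r}{k}$ matrix $\comp_k\bigl((QV\Sigma)^\top QV\Sigma\bigr)$ (in your notation), which avoids your handling of the zero eigenvalue via $\lambda^{d}$ and the factor $\det Q$, and it takes a generic $\alpha$ off finitely many hyperplanes where you give the explicit choice $\log s_i=2^i$; also, in your witness $QA$ should read $V\diag(d_{1:r})\Sigma W^\top$ rather than $\bar V\,\diag(d_{1:r})\Sigma W^\top$.
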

\begin{proof}
     The claim is trivial for $M = 0$. Therefore, let us suppose that $\comp_k(A) = M \neq 0$ with $\rank(M) \geq 1$. Since it follows from \cref{thm:compound_rank} that~$r = \rank(A) \ge k$, let a reduced SVD of $A$ be given by $A = U \Sigma V^\top$, i.e., $U^\top U = V^\top V = I$ and $\Sigma \in \mathds{R}^{r \times r}$ is diagonal and positive definite. Then, since for any $Q \in\bR^{n \times n}$ the singular values of $\comp_k(Q)M$ are the square roots of the eigenvalues of \begin{equation*}
        (\comp_k(Q) M)^\top (\comp_k(Q) M) = \comp_k(V)\comp_k(\Sigma U^\top Q^\top Q U \Sigma) \comp_k(V)^\top,
     \end{equation*}
     where $\comp_k(V)^\top \comp_k(V) = I$, our claim follows if we can show that $B :=\comp_k(\Sigma U^\top Q^\top Q U \Sigma)$ has no repeated eigenvalues for almost all $Q \in\bR^{n \times n}$. 

To this end, note that as each entry of $\Sigma U^\top Q^\top Q U \Sigma$ is a polynomial of the entries of $Q$, the same must apply to $B$. Hence, by \cite{Parlett2002MatDisc} the discriminant of the characteristic polynomial of $\comp_k(B)$ is also a polynomial of the entries of $Q$. In particular, if we can show that the this polynomial is not identical to zero, then it follows by \cite{Mityagin2020ZeroSet} or \cite[Coroally~10, p. 9]{GunningRossi}, that the set of $Q$ that make the discriminant zero, i.e., cause repeated eigenvalues, is of Lebesgue measure zero. To see this, we will show that there exists an $\alpha \in \mathds{R}^r$ such that $Q = S \Sigma^{-1} U^\top$ with
\begin{equation*}
    S = \begin{bmatrix}
    D \\
    0
\end{bmatrix} \in \mathds{R}^{n \times r} \quad \text{and} \quad D:= \diag(\exp(\alpha)) \in \mathds{R}^{r \times r},
\end{equation*}
leads to non-repeated eigenvalues of $B = \comp_k(S^\top S)$. In particular, by item 3 of~\cref{thm:compoundProperty}, $B$ has repeated eigenvalues $e^{2\sum_{i \in I} \alpha_i}$, $I \in \cI_{r,k}$, if and only if
 \begin{equation}
        \sum_{i \in I} \alpha_i = \sum_{j \in J} \alpha_j
 \end{equation}
for some $I,J \in \cI_{r,k}$ such that~$I \neq J$. Since $\bR^r$ is not the union of a finite number of subspaces, such $I$ and $J$ cannot exist for all $\alpha \in \bR^r$, which is why our claim follows.
\end{proof}

\cref{prop:preprocess-distinct} facilitates the pre-processing of $M$, i.e.,  one can first solve the inverse-compound problem for an alternative compound-decomposable $\tilde M = \comp_k(Q)M$, with an arbitrary invertible $Q\in\bR^{n\times n}$ such that $\tilde{M}$ has distinct non-zero singular values. We can then obtain $A=Q^{-1}\tilde{A}$ by finding a matrix $\tilde{A}$ with $\tilde{M}=\comp_k(\tilde{A})$, which solves our original inverse-compound problem. 
In our discussion on finding an inverse-compound, we, therefore, make the following assumption. 
\begin{ass}
\label{ass:svdsval}
    The non-zero singular values of $\comp_k(A)$ are distinct.
\end{ass}
This \cref{ass:svdsval} ensures that the left and the right singular vectors of $\comp_k(A)$ coincide with the columns of $\comp_k(V)$ and $\comp_k(W)$, respectively, up to simultaneous sign reversal, as formally stated in the following.
\begin{lem}
\label{prop:distinct-sval-svd}
    Let $A\in\bR^{n\times m}$ have $\rank(A)=r$, and $A=V\Sigma W^\top$ be a reduced SVD. Further, let $k\leq r$, such that $M=\comp_k(A)=LSR^\top$ is a reduced SVD with $S$ having distinct non-zero diagonal entries. Then $L=\comp_k(V)P\tilde{S}$ and $R=\comp_k(W)P\tilde{S}$ for a permutation matrix $P\in\{0,1\}^{\binom{r}{k}\times \binom{r}{k}}$ and diagonal sign matrix $\tilde{S}$ such that $\tilde{s}_{ii}\in\{1,-1\}$ for all $i$.
\end{lem}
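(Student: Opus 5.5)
The plan is to show that the factorization in \eqref{eq:svd-decomp} is itself a reduced SVD of $M$, and then to use the uniqueness of a reduced SVD when the non-zero singular values are distinct.

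First, I will verify that $\comp_k(V)\comp_k(\Sigma)\comp_k(W)^\top$ is a reduced SVD of $M$.

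\begin{itemize}
\item[$\circ$] \emph{Orthonormal columns.} By Cauchy--Binet \eqref{eq:cauchybinet},
\[
\comp_k(V)^\top\comp_k(V)=\comp_k(V^\top V)=\comp_k(I_r)=I_{\binom{r}{k}},
\]
and the same holds for $W$. Hence $\comp_k(V)\in\bR^{\binom{n}{k}\times\binom{r}{k}}$ and $\comp_k(W)$ have orthonormal columns.
\item[$\circ$] \emph{Diagonal factor.} $\comp_k(\Sigma)$ is diagonal, with entries $\prod_{i\in I}\sigma_i>0$ for $I\in\cI_{r,k}$, since every off-diagonal $k$-minor of a diagonal matrix vanishes.
\item[$\circ$] \emph{Size.} By \cref{thm:compound_rank}, $\rank(M)=\binom{r}{k}$, so the dimension of this factorization matches that of any reduced SVD. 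In particular, $S$ is $\binom{r}{k}\times\binom{r}{k}$, and both $L$ and $\comp_k(V)$ have $\binom{r}{k}$ columns.
\end{itemize}

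Second, I will compare the two reduced SVDs $M=LSR^\top$ and $M=\comp_k(V)\comp_k(\Sigma)\comp_k(W)^\top$. Both give spectral decompositions of $MM^\top$ and $M^\top M$ on their ranges, and the diagonal entries of $S$ are distinct by hypothesis. Therefore there is a permutation $P$ with $S=P^\top\comp_k(\Sigma)P$; this reorders the diagonal of $\comp_k(\Sigma)$, whose entries are then also distinct. Each eigenspace of $MM^\top$ for a nonzero eigenvalue is one-dimensional. Consequently each column of $L$ equals the column of $\comp_k(V)$ with the same singular value, up to a sign, and we can write $L=\comp_k(V)P\tilde S$.

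Third, I will fix the signs on the right-hand side by using $R=M^\top L S^{-1}$, which follows from the relation $M^\top L=RS$. Substituting $M^\top=\comp_k(W)\comp_k(\Sigma)\comp_k(V)^\top$ and $L=\comp_k(V)P\tilde S$ gives
\[
R=\comp_k(W)\comp_k(\Sigma)P\tilde S S^{-1}=\comp_k(W)P\,P^\top\comp_k(\Sigma)P\,\tilde S S^{-1}=\comp_k(W)P\tilde S .
\]
The last step uses $P^\top\comp_k(\Sigma)P=S$ and the fact that diagonal matrices commute. This shows that the same $P$ and $\tilde S$ appear for both $L$ and $R$.

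The main obstacle is the bookkeeping in the second step, namely keeping the permutation consistent between $L$ and $\comp_k(V)$ and justifying one-dimensional eigenspaces from distinctness of the diagonal of $S$. Once that is in place, the third step is a direct computation.
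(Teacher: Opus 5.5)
Your proof is correct and takes the same approach the paper intends: the paper states this lemma without a separate proof and relies on the factorization $M=\comp_k(V)\comp_k(\Sigma)\comp_k(W)^\top$ being a reduced SVD, combined with uniqueness of the SVD when the singular values are distinct, which is exactly what you establish. Your third step, which derives the same $P\tilde{S}$ for $R$ from $R=M^\top L S^{-1}$, makes explicit the sign coupling between $L$ and $R$ that the paper only asserts.
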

The following outlines the main steps of our algorithm for finding an inverse-compound of $M=\comp_k(A)$ with some $A\in\bR^{n\times m}$, $k<\rank(A)$, where $M=LSR^\top$ and $A=V\Sigma W^\top$ are the reduced SVDs: 
\begin{enumerate}
    \item \textbf{Wedge-decomposition:} Given a set of non-zero $k$-wedges $\{z_i\}_{i=1}^{\binom{n}{k}}$ such that $z_i = u_{i_1}\wedge\cdots\wedge u_{i_k}$, where $(i_1,\ldots,i_k)\in\cI_{n,k}$ is the $i$th element, find $\{\pm u_1,\ldots,\pm u_n\}$ with $\|u_{j\in[n]}\|_2=1$.
    Leveraging this fundamental step, we recover $\hat{V}=\{\pm V_{:,i}\}_{i\in[r]}$ and $\hat{W}=\{\pm W_{:,i}\}_{i\in[r]}$ from $L$ and $R$ respectively. Note that $\hat{V}$ and $\hat{W}$ are not guaranteed to have sign-consistent columns.
    \item \textbf{Ordering singular values:} Since $S$ is not guaranteed to be compound-decomposable, use $\comp_k(\hat{V})$ and $M$ to recover $\comp_k(\hat\Sigma)$, such that $\diag(\hat\Sigma)$ is a permutation of $\diag(\Sigma)$. The diagonal of $\comp_k(\hat\Sigma)$ is a permutation of $\diag(S)$.
    \item \textbf{Sign-consistency restoration:} Adjust the column signs of $\hat{W}$ using $\comp_k(\hat{\Sigma})$, $\comp_k(\hat{V})$, $L$ and $R$, giving $\tilde{W}$ as output. $\tilde{W}$ is consistent in column signs with $\tilde{V} := \hat{V}$.
    \item \textbf{Singular values recovery:} Finally, from the diagonal of $\comp_k(\hat\Sigma)$ recovered in step $2$, find $\hat\Sigma$ by solving a linear system of equations.
\end{enumerate}
Composing the components as determined above, it follows that $\tilde{V}\hat\Sigma \tilde W^\top$ is an inverse-compound of $M$.
Before we detail each of these steps, we introduce the following example of a compound-decomposable matrix $M = \comp_k(A)$, which in the remainder of the paper will be used to illustrate the above steps.
\begin{ex}
\label{ex:examplematrix}
    Let $A\in\bR^{4\times 4}$ be given by
    \begin{equation*}
        A = 
        \begin{pmatrix*}[r]
        3 & -1 & -6 & -4 \\
        3 & -3 & -2 & 4 \\
        4 & 3 & 7 & 1 \\
        -5 & -1 & -1 & 1
        \end{pmatrix*}
    \end{equation*}
    {\normalsize with $\rank(A)=3$. $A$ has a reduced SVD decomposition $A = V\Sigma W^\top$ as},
    \begin{equation*}
        A = 
        \begin{pmatrix*}[r]
        0.58 & -0.60 & 0.34 \\
        0.13 & -0.32 & -0.93 \\
        -0.79 & -0.35 & 0.08 \\
        0.18 & 0.64 & -0.10
        \end{pmatrix*}
        \begin{pmatrix*}[r]
        10.50 & 0.00 & 0.00 \\
        0.00 & 7.60 & 0.00\\
        0.00 & 0.00 & 5.92
        \end{pmatrix*}
        \begin{pmatrix*}[r]
        -0.19 & -0.97 & -0.16 \\
        -0.33 & -0.02 & 0.47 \\
        -0.90 & 0.16 & 0.08 \\
        -0.23 & 0.19 & -0.86
        \end{pmatrix*}^\top
    \end{equation*}
    {\normalsize However, $A$ is apriori unknown to us and instead we are given}
    \begin{equation*}
        M := \comp_2(A) = 
        \begin{pmatrix*}[r]
        -6 & 12 & 24 & -16 & -16 & -32 \\
        13 & 45 & 19 & 11 & 11 & 22 \\
        -8 & -33 & -17 & -5 & -5 & -10 \\
        21 & 29 & -13 & -15 & -15 & -30 \\
        -18 & -13 & 23 & 1 & 1 & 2 \\
        11 & 31 & 9 & 4 & 4 & 8
        \end{pmatrix*}.
    \end{equation*}
{\normalsize By \cref{thm:compound_rank}, $\rank(M)=3$ with reduced SVD $M = L S R^\top$, where}
\[
M = 
\begin{pmatrix*}[r]
0.11 & 0.58 & 0.67 \\
0.67 & -0.31 & 0.07\\
-0.48 & 0.12 & -0.16\\
0.29 & 0.72 & -0.35\\
-0.14 & -0.16 & 0.63\\
0.44 & -0.06 & -0.02
\end{pmatrix*} 
\begin{pmatrix*}[r]
79.80 & 0.00 & 0.00\\
0.00 & 62.12 & 0.00\\
0.00 & 0.00 & 45.01
\end{pmatrix*}
\begin{pmatrix*}[r]
0.32 & 0.14 & -0.46 \\
0.90 & 0.16 & -0.05 \\
0.26 & -0.12 & 0.87\\
0.07 & -0.40 & -0.08\\
0.07 & -0.40 & -0.08 \\
0.13 & -0.79 & -0.15
\end{pmatrix*}^\top
\]
Given $k=2$, our goal is to recover $A$ by using $L$, $S$ and $R$.
\end{ex}

\input{esvector}
\input{esvalue}

%% file: esvector.tex
\subsection{Recovering Singular Vectors from Wedge Decompositions}
\label{sec:kwedgedecomp}
Let $M\in \bR^{\binom{n}{k}\times \binom{m}{k}}$ be compound-decomposable such that $M=LSR^\top$ is a \textit{reduced SVD}, and $M$ follows \cref{ass:svdsval}. As discussed in \cref{prop:distinct-sval-svd}, $L$ and $R$ may not be compound-decomposable themselves, but only up to a column permutation and simultaneous sign reversal of a subset of their columns. This section shows that it is enough to decompose the columns of $L$ (resp. $R$) to recover the left and right singular vectors of $A$ such that $\comp_k(A)=M$. 

To this end, let $U = [u_1,\ldots,u_r]\in\bR^{n\times r}$ with $1<r\leq n$ be such that each column has unit norm. For a given $k<r$, $\comp_k(U) = [z_1,\ldots,z_{\binom{r}{k}}]$, such that the column $z_i = u_{i_1}\wedge\cdots\wedge u_{i_k}$ where the index tuple $(i_1,\ldots,i_k)$ is the $i$th element of $\cI_{r,k}$. 
The key to our algorithm is to find a decomposition of each of the $k$-wedges $\{\pm z_i\}_{i\in[\binom{r}{k}]}$ as a $k$-dimensional subspace and recover the columns of $U$, up to the sign, by intersecting the subspaces.
The following theorem gives a sufficient condition for the recovery.
\begin{thm}
\label{thm:unique-decomposition}
    Let $U=[u_1,\cdots, u_r]$ be such that any $k<r$ columns of $U$ are linearly independent, and $\|u_\ell\|_2=1$ for all $\ell\in [r]$. Then \cref{alg:ESvecRecovery}, having as input a set of $k$-wedges $\{\pm z_i\cond z_i = u_{i_1}\wedge\cdots\wedge u_{i_k}, 1\leq i\leq \binom{r}{k}\}$,
    \begin{enumerate}
        \item for each $z_i$ computes the subspace $\spanop\{u_{i_1},\ldots,u_{i_k}\}$. The total time complexity of computing all the subspaces is $\bigO{n^2\binom{n}{k+1}\binom{r}{k}}$.
        \item computes all one-dimensional pairwise intersections among the subspaces generated in step $1$, returning $\hat{u}_\ell=\pm u_\ell$ for all $1\leq \ell\leq r$ as output. The total time to compute $\hat{U}=[\hat{u}_1,\cdots,\hat{u}_r]$ is of complexity
            \begin{enumerate}
                \item $\bigO{\binom{r}{k}^2 n k^2}$ if $k\leq \lceil r/2 \rceil$.
                \item $\bigO{\binom{r}{\lceil r/2\rceil}^2 n k^2 \log r}$ if $\lceil r/2 \rceil <k < r$.
            \end{enumerate}
    \end{enumerate}
\end{thm}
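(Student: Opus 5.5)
Since \cref{alg:ESvecRecovery} is not shown before the statement, I will prove \cref{thm:unique-decomposition} for the natural algorithm suggested by the preliminaries. It has two phases: recovering subspaces from wedges, and recovering lines from intersections of subspaces.

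\textbf{Step 1: subspaces.} Fix $z = \pm z_i \neq 0$. By the characterization in \cref{sec:extAlgebra}, $x \wedge z = 0$ if and only if $x \in \spanop\{u_{i_1},\ldots,u_{i_k}\}$. The overall sign is irrelevant here.

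The map $x \mapsto x\wedge z$ is linear from $\bR^n$ to $V^{(k+1)}$. Its matrix $Z_z \in \bR^{\binom{n}{k+1}\times n}$ has entries in $\{0,\pm z_J\}$, given by the Laplace expansion of the $(k+1)$-wedge. The kernel of $Z_z$ is exactly the desired $k$-dimensional span, and $z\neq 0$ because any $k$ columns of $U$ are linearly independent.

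To compute the kernel, I will run an SVD or QR of $Z_z^\top Z_z$, or of $Z_z$ itself. This costs $\bigO{n^2\binom{n}{k+1}}$ per wedge, which gives $\bigO{n^2\binom{n}{k+1}\binom{r}{k}}$ in total. The output is an orthonormal basis $B_i\in\bR^{n\times k}$ of $\cS_i:=\spanop\{u_{i_1},\ldots,u_{i_k}\}$.

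\textbf{Step 2: intersections.} Take index tuples $I,J\in\cI_{r,k}$ with $|I\cap J| = k-1$. Using the linear independence hypothesis, $\cS_I\cap\cS_J=\spanop\{u_\ell : \ell\in I\cap J\}$. The hypothesis applies to the union, which has $k+1\le r$ columns when $k<r$. It is only stated for $k$ columns, but I would read it as general position, i.e.\ that the relevant columns $u_{I\cup J}$ are independent, so I will check this.

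For $k=1$ the lines are given directly. For general $k$, I intersect iteratively: fix $\ell$ and choose $k$ tuples containing $\ell$ whose pairwise-intersected common index set is exactly $\{\ell\}$. For example, take $I_0\ni\ell$ and, for each $j\in I_0\setminus\{\ell\}$, replace $j$ by an index outside $I_0$. Then $\bigcap_t \cS_{I_t} = \spanop\{u_\ell\}$, by the same independence argument applied to the at most $2k-1$ indices involved. This intersection is one-dimensional, so normalizing a spanning vector gives $\hat u_\ell = \pm u_\ell$, since $\|u_\ell\|_2=1$.

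Each pairwise intersection of subspaces with orthonormal bases $B_I,B_J$ can be computed from the null space of $[B_I,\,-B_J]$. Equivalently, I can use the SVD of $B_I^\top B_J$, where unit singular values correspond to intersection directions; this costs $\bigO{nk^2}$.

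\textbf{Complexity case split.} When $k\le\lceil r/2\rceil$, it suffices to bound the work by at most all pairs of subspaces, which gives $\bigO{\binom{r}{k}^2nk^2}$. When $k>\lceil r/2\rceil$, this bound is bad because $\binom rk$ is no longer increasing in $k$. Instead, I would take a divide-and-conquer route: intersect subspaces in a balanced binary tree, halving the common index set at each level. That gives $\log r$ levels, and each level involves at most $\binom{r}{\lceil r/2\rceil}^2$ pair intersections of cost $nk^2$, giving $\bigO{\binom{r}{\lceil r/2\rceil}^2 nk^2\log r}$.

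\textbf{Main obstacle.} I expect the hardest part to be this last accounting. It requires showing that the intermediate intersected subspaces, of dimensions between $1$ and $k$, are still spans of subsets of columns, so that the pairwise-intersection lemma keeps applying. It also requires showing that the number of distinct intermediate spans at each level is bounded by $\binom{r}{\lceil r/2\rceil}$, because these correspond to subsets of $[r]$.
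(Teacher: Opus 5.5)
Your Step~1 is the paper's argument: the matrix of $x\mapsto x\wedge z$ from \cref{thm:wedgeMat}, with its kernel computed via an SVD. You are also right that the intersection step needs more than independence of $k$ columns. The paper tacitly uses full column rank of $U$ (in its application $U$ has orthonormal columns).

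The gap is in Step~2. Your construction of $\spanop\{u_\ell\}$ picks particular tuples $I_0,I_1,\dots$. But the algorithm receives an unordered set of wedges, since the columns of $L$ come in an unknown permuted order. So it cannot address subspaces by their index tuples; it must intersect blindly and select by dimension. Moreover, your chain is a $k$-fold intersection whose intermediate subspaces are not among the original $\binom{r}{k}$ ones, so ``all pairs of subspaces'' does not account for it.

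The observation you are missing for Case~1 is that $k\le\lceil r/2\rceil$ gives $2k-1\le r$. Hence for every $\ell$ there exist $I,J\in\cI_{r,k}$ with $I\cap J=\{\ell\}$, so each line is a \emph{single} pairwise intersection of two original subspaces. Under independence, $\spanop\{u_i:i\in I\}\cap\spanop\{u_j:j\in J\}=\spanop\{u_i:i\in I\cap J\}$. Therefore every one-dimensional pairwise intersection is some $\spanop\{u_\ell\}$, and every $\ell$ occurs. Scanning all pairs and keeping the one-dimensional intersections thus returns exactly $\{\pm u_\ell\}$, at cost $\bigO{\binom{r}{k}^2nk^2}$.

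For Case~2, ``halving the common index set'' is not realizable. Two $s$-dimensional column spans meet in dimension at least $2s-r$. So for $k>2r/3$ the dimension cannot be halved in one step; for example, $k=r-1$ only reaches $r-2$. You also give no label-free rule for which intersections to keep.

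The paper's rule settles exactly the obstacle you flag: in round $t$, keep only the intersections of minimal dimension $s_t=2s_{t-1}-r$. These come exactly from pairs whose index union is $[r]$. Every $s_t$-subset arises this way: split its complement, of size $2(r-s_{t-1})$, into two halves. So the kept family is precisely the spans of all $s_t$-subsets, and $|\cS^t|=\binom{r}{s_t}\le\binom{r}{\lceil r/2\rceil}$. What doubles each round is the codimension, $r-s_t=2^t(r-k)\ge 2^t$. After $\bigO{\log r}$ rounds, $s_t\le\lceil r/2\rceil$ and the Case~1 argument finishes, giving $\bigO{\binom{r}{\lceil r/2\rceil}^2nk^2\log r}$.
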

The two steps of the above theorem are elaborated in the next two subsections.

\subsubsection{Creating Wedge Matrices}
\label{sec:wedgematrix}
This section provides a constructive proof of step $1$ of \cref{thm:unique-decomposition}.
Recall that for every non-zero and decomposable vector~$z$ in the $k$th exterior space~$V^{(k)}$, there exists a unique $k$-dimensional subspace~$\mathcal{U}$ in~$V$ such that if~$z = u_1 \wedge \cdots \wedge u_k$, then~$\spanop\{u_1,\dots,u_k\} = \mathcal{U}$. The following lemma provides a method that uses the entries of the vector~$z$ to construct a matrix whose kernel is exactly~$\mathcal{U}$. Given an element~$s$ in an ordered set~$S$, we will use~$\indexof(s,S)$ to denote the index of~$s$ in~$S$. For example
\begin{equation*}
    \indexof(3, (1,3,4)) = 2, \quad \text{and} \quad\indexof((1,3,4), \cI_{4,3}) = 3.
\end{equation*}
\begin{lem}
\label{thm:wedgeMat}
    Let $k < n$, $z \in \bR^{\binom{n}{k}}$ be non-zero and decomposable, and $\cU \subset \bR^n$ denote the unique $k$-dimensional subspace in~$\bR^n$ such that if~$z = u_1 \wedge \cdots \wedge u_k$ then~$\spanop\{u_1,\dots,u_k\} = \cU$. Define the matrix~$M_z \in \bR^{\binom{n}{k+1} \times n}$ by
    \begin{equation}\label{eq:Mz_def}
        (M_z)_{i,j} = \begin{cases}
            0, & j \notin I, \\
            (-1)^{\indexof(j,I)-1} z_{\indexof(I \setminus \{j\}, \cI_{n,k})}, & \text{otherwise},
        \end{cases}
    \end{equation}
    where~$I$ is the $i$th element of~$\cI_{n,k+1}$. Then,~$\ker(M_z) = \mathcal{U}$.
\end{lem}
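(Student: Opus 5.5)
The plan is to identify each row of $M_z$ with a coordinate of the wedge product $x \wedge z \in V^{(k+1)}$. The kernel of $M_z$ is then exactly $\{x : x\wedge z = 0\}$. By the fact recalled at the end of \cref{sec:extAlgebra}, this set equals $\spanop\{u_1,\dots,u_k\}=\cU$ for any decomposition $z=u_1\wedge\cdots\wedge u_k$. So the whole proof reduces to a coordinate computation of $x\wedge z$ in the basis $\{e_{I}: I\in\cI_{n,k+1}\}$.

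\textbf{Step 1: expand $x\wedge z$ in coordinates.} Write $x=\sum_{j} x_j e_j$ and $z=\sum_{J\in\cI_{n,k}} z_J\, e_{J}$, where $e_J := e_{j_1}\wedge\cdots\wedge e_{j_k}$. By multilinearity,
\[
x\wedge z=\sum_{j=1}^n\sum_{J\in\cI_{n,k}} x_j z_J\, e_j\wedge e_J .
\]
By anti-symmetry, $e_j\wedge e_J=0$ whenever $j\in J$. If $j\notin J$, let $I=J\cup\{j\}\in\cI_{n,k+1}$. Moving $e_j$ to its sorted position in $I$ requires $\indexof(j,I)-1$ adjacent transpositions, so $e_j\wedge e_J=(-1)^{\indexof(j,I)-1}e_I$.

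\textbf{Step 2: read off the $I$th coordinate.} Collecting terms, the coefficient of $e_I$ is
\[
(x\wedge z)_I=\sum_{j\in I}(-1)^{\indexof(j,I)-1} z_{I\setminus\{j\}}\,x_j .
\]
This is precisely $(M_z x)_i$ for $I$ the $i$th element of $\cI_{n,k+1}$, by \eqref{eq:Mz_def}. Hence $M_z x$ is the coordinate vector of $x\wedge z$.

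\textbf{Step 3: conclude.} It follows that $\ker(M_z)=\{x: x\wedge z=0\}$. To justify the characterization directly, extend $u_1,\dots,u_k$ to a basis $u_1,\dots,u_n$ of $\bR^n$. Then for $x=\sum_\ell c_\ell u_\ell$,
\[
x\wedge z=\sum_{\ell>k}c_\ell\, u_\ell\wedge u_1\wedge\cdots\wedge u_k .
\]
The wedges $u_\ell\wedge u_1\wedge\cdots\wedge u_k$ for $\ell>k$ are, up to sign, distinct basis elements of $V^{(k+1)}$ built from this basis, and hence are linearly independent. So $x\wedge z=0$ iff $c_\ell=0$ for all $\ell>k$, i.e. iff $x\in\cU$. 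Here $k<n$ guarantees that $V^{(k+1)}$ is nontrivial.

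The only delicate point is the sign bookkeeping in Step 1: one must check that the exponent $\indexof(j,I)-1$, rather than $k+1-\indexof(j,I)$, matches placing $x$ on the left of $z$. Either convention yields the same kernel, since the sign is applied per row, so this obstacle does not affect the conclusion.
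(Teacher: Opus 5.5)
Your proposal is correct and takes essentially the paper's route. Both arguments identify $M_z x$ with the coordinate vector of $x\wedge z$ and then use $x\wedge z=0 \iff x\in\mathcal{U}$. You obtain the coordinates by expanding $e_j\wedge e_J$ and counting transpositions, while the paper uses the Grassmann determinant of $[e_j\ u_1\ \cdots\ u_k]$ and moves the unit row to the top; you also prove the kernel characterization instead of citing it.

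One small quibble: in your closing remark, the two conventions differ by the global factor $(-1)^k$ (since $x\wedge z=(-1)^k\, z\wedge x$), and that is the real reason the kernel is unaffected. The sign $(-1)^{\indexof(j,I)-1}$ itself depends on the column $j$, so "applied per row" does not describe it.
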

\begin{rem}
    To compute~$\indexof(I, \cI_{n,k})$, one can either perform a simple linear search on the ordered set~$\cI_{n,k}$, or use the explicit formula in the appendix of~\cite{Ofir2023KroneckerCompounds}.
\end{rem}
\begin{proof}
    Recall that~$x \in \mathcal{U}$ if and only if~$x \wedge z = 0$. Since the exterior product is multilinear, the mapping~$f : \bR^n \to \bR^{\binom{n}{k+1}}$ defined by $f(x) := x \wedge z$ is linear, i.e.,  there exists a matrix~$M_z \in \bR^{\binom{n}{k+1} \times n}$ such that~$f(x) = M_z x$, and~$\ker(M_z) = \mathcal{U}$. We will prove that the matrix defined in~\eqref{eq:Mz_def} is a matrix representation of~$f(x)$ for the canonical basis, i.e., the~$j$ column of~$M_z$ is equal to~$f(e_j)$.

    To this end, let $u_1,\dots,u_k \in \bR^n$ be such that~$z = u_1 \wedge \cdots \wedge u_k$. Fix~$i \in \{1,\dots,\binom{n}{k+1}\}, \;j \in \{1,\dots,n\}$. 
    In Grassmann notation, the~$i$th entry of~$f(e_j)$ is given by
    \begin{equation*}
        (f(e_j))_i = \det (W_{I,1:k+1})
    \end{equation*}
    where~$W = \begin{bmatrix} e_j & u_1 & \dots & u_k \end{bmatrix}$ and~$I=(i_1,\dots,i_{k+1})$ is the~$i$th element of~$\cI_{n,k+1}$. If~$j \notin I$, all entries of the first column of the submatrix~$W_{I,1:k+1}$ are zeros, so~$(f(e_j))_i = 0$. Suppose now that~$j \in I$ and $\ell$ is such that~$I_\ell = j$, or equivalently $\ell = \indexof(\ell, I)$. Then~$W_{I,1:k+1}$ takes the form
    \begin{equation*}
        W_{I,1:k+1} = 
        \begin{bmatrix*}
            0 & u_{i_1 1} & \dots & u_{i_1 k} \\
            \vdots & & & \vdots \\
            0 & u_{i_{\ell-1} 1} & \dots & u_{i_{\ell-1} k} \\
            1 & u_{i_\ell 1}  & \dots & u_{i_\ell k} \\
            0 & u_{i_{\ell+1} 1} & \dots & u_{i_{\ell+1} k} \\
            \vdots & & & \vdots
        \end{bmatrix*}
    \end{equation*}
    By the properties of determinants,
    \begin{align*}
        \det (W_{I,1:k+1})& = (-1)^{\ell-1} \det \left(
        \begin{bmatrix}
            1 & u_{i_\ell 1}  & \dots & u_{i_\ell k} \\
            0 & u_{i_1 1} & \dots & u_{i_1 k} \\
            \vdots & & & \vdots \\
            0 & u_{i_{\ell-1} 1} & \dots & u_{i_{\ell-1} k} \\
            0 & u_{i_{\ell+1} 1} & \dots & u_{i_{\ell+1} k} \\
            \vdots & & & \vdots
        \end{bmatrix}\right) \\
        &= (-1)^{\ell-1} \det \left(
        \begin{bmatrix}
            u_{i_1 1} & \dots & u_{i_1 k} \\
            \vdots & & \vdots \\
            u_{i_{\ell-1} 1} & \dots & u_{i_{\ell-1} k} \\
            u_{i_{\ell+1} 1} & \dots & u_{i_{\ell+1} k} \\
            \vdots & & \vdots
        \end{bmatrix} \right)\\
        &= (-1)^{\ell-1} z_{\indexof(\{i_1, \dots, i_{\ell-1}, i_{\ell+1}, \dots, i_k\}, \cI_{n,k})}.
    \end{align*}
    Since~$I_\ell = j$ by definition, i.e., $I=(i_1, \dots, i_{\ell-1}, j, i_{\ell+1}, \dots, i_k)$, it follows that~$(f(e_j))_i = (M_z)_{i,j}$ as claimed.
\end{proof}
Using the construction presented in the above \cref{thm:wedgeMat}, we can construct a set of wedge matrices corresponding to the columns of $\comp_k(U)$, thus proving step $1$ of \cref{thm:unique-decomposition}.
\begin{rem}\label{rem:ker_M_z}
    The time complexity for the wedge matrix construction given by \cref{thm:wedgeMat} for a decomposable vector $z$ in the exterior space of dimension $\binom{n}{k}$ is linear in the size of $M_z\in \bR^{\binom{n}{k+1}\times n}$. Moreover, computing the $k$-dimensional subspace corresponding to $M_z$ involves computing $\ker(M_z)$. A basis of unit vectors of $\ker(M_z)$ can be computed via its SVD $M_z=U_1\Lambda U_2^T$, given by the columns of $U_2$ that correspond to zero singular values, which amounts to a time complexity of $\bigO{\binom{n}{k+1}n^2}$ (see \cite{trefethen1997numerical}). Thus, for all the columns of $\comp_k(U)$, the total time complexity becomes $\bigO{\binom{n}{k+1}\binom{r}{k} n^2}$.
\end{rem}
The following example demonstrates the creation of a wedge matrix described above.
\begin{ex}
    Let us consider $U\in\bR^{4\times 4}$ as,
    \[
    U = 
    \begin{bmatrix}
        u_1 & u_2 & u_3 & u_4
    \end{bmatrix}
    \]
    where $u_i=[u_{1i}\ u_{2i}\ u_{3i}\ u_{4i}]^\top$ for $1\leq i\leq 4$.
    Now for $k=2$, consider the compound
    \[\comp_2(U) =
    \begin{bmatrix}
        z_1 & z_2 &\ldots & z_6
    \end{bmatrix} \in \bR^{6\times 6}.
    \]
    The column $z_1$ can be expressed as
    \begin{align*}
    z_1 
    &= u_1\wedge u_2\\
    &= (u_{11}e_1 + u_{21}e_2 + u_{31}e_3 + u_{41}e_4) \wedge (u_{12}e_1 + u_{22}e_2 + u_{32}e_3 + u_{42}e_4)\\
    &= (u_{11}u_{22}-u_{21}u_{12})(e_1\wedge e_2) + (u_{11}u_{32}-u_{31}u_{12}) (e_1\wedge e_3) + \\
    &\phantom{,,,} (u_{11}u_{42}-u_{41}u_{12})(e_1\wedge e_4) +
    (u_{21}u_{32}-u_{31}u_{22})(e_2\wedge e_3) + \\
    &\phantom{,,,} (u_{21}u_{42}-u_{41}u_{22})(e_2\wedge e_4) + (u_{31}u_{42}-u_{41}u_{32})(e_3\wedge e_4)\\
    &= \sum_{i=1}^6 z_{i 1} \cdot e_{I_i},
    \end{align*}
    where $I_i$ denotes the $i$th element of $\cI_{4,2}$, and $e_{I_i}$ denotes the corresponding basis of the exterior product space ${\mathds{R}^{4}}^{(2)}$, e.g., $e_{I_1}= e_1\wedge e_2$.
    Since $u_1 \wedge z_1 = 0$ (similarly for $u_2$) and
    \begin{align}
    \label{eq:extprodzero}
    u_1 \wedge z_1
    &= (u_{11}e_1 + u_{21}e_2 + u_{31}e_3 + u_{41}e_4) \wedge z_1 \notag\\
    &= (u_{11}z_{4 1}-u_{21}z_{2 1}+u_{31}z_{1 1})(e_1\wedge e_2\wedge e_3) + \notag \\
    & \phantom{,,,} (u_{11}z_{5 1}-u_{21}z_{3 1}+u_{41}z_{1 1})(e_1\wedge e_2\wedge e_4) + \notag\\
    &\phantom{,,,} (u_{11}z_{6 1}-u_{31}z_{3 1}+u_{41}z_{2 1})(e_1\wedge e_3\wedge e_4) + \notag\\
    &\phantom{,,,} (u_{21}z_{6 1}-u_{31}z_{5 1}+u_{41}z_{4 1})(e_2\wedge e_3\wedge e_4)\notag\\
    &= 0,
    \end{align}
    it must hold that every coefficient equals zero.
    Thus, defining our wedge matrix $M_{z_1}\in \bR^{4\times 4}$ corresponding to $z_1$ as
    \[
    M_{z_1} = 
    \begin{pmatrix*}[r]
        z_{4 1} & -z_{2 1} & z_{1 1} & 0\\
        z_{5 1} & -z_{3 1} & 0 & z_{1 1}\\
        z_{6 1} & 0 & -z_{3 1} & z_{2 1}\\
        0 & z_{6 1} & -z_{5 1} & z_{4 1}
    \end{pmatrix*},
    \]
    i.e., the condition for the entries of $u_1=[u_{11},u_{21},u_{31},u_{41}]^\top$ in \cref{eq:extprodzero} is equivalent to $u_1\in \ker(M_{z_1})$, and similarly, $u_2\in \ker(M_{z_1})$.
\end{ex}
Thus, from $\comp_k(U)$ for $U\in\bR^{n\times r}$, we are able to recover all the $k$-dimensional subspaces spanned by every $k\leq r$ columns of $U$. Notice that $k=r$ gives just one subspace, which is $\im(U)$.
Our next goal is to extract $\{u_1,\ldots,u_r\}$ from the $k$-dimensional subspaces.

\subsubsection{Recovery by Wedge Intersections}
\label{sec:evrecovery}

\begin{algorithm}
    \small
    \caption {Wedge Decomposition and Intersection for $U\in\bR^{n\times r}$ and $k<r$}
    \label{alg:ESvecRecovery}
    \begin{algorithmic}[1]
    \Procedure{Main}{}
        \State \textbf{Input}: $\{z_1, \ldots,  z_{\binom{r}{k}}\}$ s.t. $\comp_k(U)=[\hat{z}_1, \cdots,  \hat{z}_{\binom{r}{k}}]$ where $\forall i\in[\binom{r}{k}]$, $z_i\in\{\hat{z}_i,-\hat{z}_i\}$, $n, r, k$.
        \State \textbf{Output}: Columns of $\hat{U}=[\hat{u}_1,\cdots, \hat{u}_r]$ where $\hat{u}_i=\pm u_i$ for all $i\in[r]$.
        \State \textbf{Initialize} $\hat{U}$ as a null matrix.
        \State Compute wedge matrices $M_{z_i}\in\bR^{\binom{n}{k+1} \times n}$ for $1\leq i\leq\binom{r}{k}$, as given in \cref{thm:wedgeMat}.
        \For{$i\gets 1,\ldots,\binom{r}{k} $}
            \State Set $S_i=\ker(M_{z_i})$ s.t. $S_i = \im(B_i)$ with basis $B_i$;
        \EndFor
        \State Set $\cS^0 \gets [\operatorname{vec}(B_1),\ldots,\operatorname{vec}(B_{\binom{r}{k}})]$
        \If{$k\leq \lceil r/2 \rceil$}
        \For{$i \gets 1,\ldots,|\cS^0|$}
            \For{$j \gets i+1,\ldots,|\cS^0|$}
            \State $B \gets \Call{Intersect}{S_i, S_j, n}$; \Comment{$S_i$: $i$th element of $\cS^0$}
            \If{$B$ has single column $\&$ $B_{:,1}\notin \hat{U}$ $\&$ $-B_{:,1}\notin \hat{U}$}
            \State $\hat{U}\gets [\hat{U}, B_{:,1}]$; \Comment{retain 1-D intersections}
            \EndIf
            \EndFor
        \EndFor
        \Else
        \State Set $s = 2k-r$; \Comment{min-intersection dimension}
        \While{$s > \lceil r/2 \rceil$}
            \State $\cS_{next}\gets\emptyset$; $\cS\gets\cS^0$;
            \For{$i \gets 1,\ldots,|\cS|$}
            \For{$j \gets i+1,\ldots,|\cS|$}
                \State $B \gets \Call{Intersect}{S_i, S_j, n}$; \Comment{$S_i:$ ith element of $\cS$}
                \If{$B$ has $s$ columns}
                    \State $\cS_{next}\gets [\cS_{next}, \operatorname{vec}(B)]$; \Comment{update current intersection set}
                \EndIf
            \EndFor
            \EndFor
            \State $\cS \gets \cS_{next}$; $s = 2s-r$; \Comment{update dim(min-intersection)}
        \EndWhile
        \For{$i \gets 1,\ldots,|\cS|$}
            \For{$j \gets i+1,\ldots,|\cS|$}
                \State $B \gets \Call{Intersect}{S_i, S_j, n}$; 
                \If{$B$ has single column $\&$ $B_{:,1}\notin \hat{U}$ $\&$ $-B_{:,1}\notin \hat{U}$}
                \State $\hat{U}\gets [\hat{U}, B_{:,1}]$;
                \EndIf
            \EndFor
        \EndFor
        \EndIf
        \State \textbf \Return $\hat{U}$
    \EndProcedure

    \Function{Intersect}{$S_1$, $S_2$, $n$}
    \State Set $B_1\gets reshape(S_1,n)$ and $B_2\gets reshape(S_2,n)$; \Comment{$n$-dim basis}
    \State Set $k\gets$ number of columns in $B_1$;
    \State $N \gets \ker([B_1 \ -B_2])$;
    \State $B \gets B_1 N_{1:k,:}$; \Comment{intersected subspace basis}
    \State \Return $B$;
    \EndFunction
    \end{algorithmic}
\end{algorithm}

This section provides a constructive proof of step $2$ of \cref{thm:unique-decomposition}.
Now that we have determined the $k$-dimensional subspaces, given by $\spanop\{u_{i_1},\ldots,u_{i_k}\}$ for all $1\leq i_1<\cdots<i_k\leq r$, we next discuss a method which finds $\{\pm u_1,\ldots,\pm u_r\}$ given by the basis of the subspace intersections, which implicitly requires $k<r$.
For $k=r$, the previous subsection generates only one subspace $\cU = \spanop\{u_1,\ldots,u_r\}$ corresponding to the single column of $\comp_k(U)$. Any basis of $\cU$ will give a member from the set $\{V\in\bR^{n\times r} \cond \comp_k(V)=\comp_k(U)\}$, due to the decomposability property in \Cref{sec:extAlgebra}, i.e., $\hat{U} = U R$ where $R\in \bR^{r\times r}$ is an invertible change-of-basis matrix in $\cU$.

The subsequent discussion assumes $k<r$. For the sake of completeness, we present now the closed-form solution to the intersection of two subspaces.
\vspace{2mm}

\textbf{Basis construction of two Subspace Intersection:}
Consider two $k$-dimensional subspaces $S_1, S_2 \subseteq  \bR^n$ with basis given by $B_1,B_2\in \bR^{n\times k}$, respectively, such that, $S_1 = \im(B_1)$ and $S_2 = \im(B_2)$. Then, $x \in S := S_1 \cap S_2$ if and only if there exist $\alpha_1,\alpha_2\in \bR^k$ fulfilling
\begin{equation}
\label{eq:2subspaceint}
\begin{bmatrix}
    B_1 & -B_2
\end{bmatrix}
\begin{bmatrix}
    \alpha_1\\
    \alpha_2
\end{bmatrix}
=0 \quad \Longleftrightarrow  \quad \begin{bmatrix}
    \alpha_1\\
    \alpha_2
\end{bmatrix} \in \ker([B_1\ -B_2 ]).
\end{equation}
In particular, if the columns of $N\in \bR^{2k\times s}$ represent a basis of $\ker([B_1\ -B_2])$, then the columns of $B := \begin{bmatrix}
    B_1 N_{1:k,1} & \cdots & B_1 N_{1:k,s}
\end{bmatrix}$ are a basis of $S$. 
\begin{rem} \label{rem:complexity_null}
  Computing the basis of $S_1\cap S_2$, where $S_1,S_2\subseteq \bR^n$ are of dimension $k$, involves the SVD of a $(n\times 2k)$-dimensional matrix as described above, which analogous to \cref{rem:ker_M_z} amounts to a time complexity of $\bigO{nk^2}$.
\end{rem}
We proceed by discussing the pairwise-intersection method among the subspaces contained in $\cS^0=\{\spanop\{u_{i_1},\ldots,u_{i_k}\}\cond 1\leq i_1<\cdots<i_k\leq r\}$, generated in the previous subsection, each corresponding to a column of $\comp_k(U)$. This method, elaborated in \cref{alg:ESvecRecovery}, recovers $\hat{U}$ as stated in the second step of \cref{thm:unique-decomposition} by computing one-dimensional intersections among the subspaces in $\cS^0$.

\begin{proof}[A constructive proof of \cref{thm:unique-decomposition} (step $2$):] 
Given the columns of $\comp_k(U)\in\bR^{\binom{n}{k}\times \binom{r}{k}}$ as input, depending on the value of $k$ and $r$, \cref{alg:ESvecRecovery} works on the following two cases:
\begin{itemize}
    \item \textbf{Case $1$} ($k\leq \lceil r/2 \rceil$): For any $u_i$, $i\in[r]$, there exist two $k$-dimensional subspaces $S_1, S_2\in\cS^0$, such that $S_1\cap S_2 = \spanop\{u_i\}$. As $\|u_i\|_2=1$ by assumption, such intersection recovers $\hat{u_i} = \pm u_i$. However, as we are unaware of a correct lexicographical order of the columns of $\comp_k(U)$, we have to exhaustively examine all $\binom{r}{k}^2$ intersections among the elements of $\cS^0$, which as discussed in \cref{rem:complexity_null}, comes with a total time complexity of $\bigO{\binom{r}{k}^2 nk^2}$.
    \item \textbf{Case $2$} ($\lceil r/2 \rceil<k<r$): We run the following iterative algorithm. Set $s_0=k$.
    \begin{enumerate}
    \item In round $t=1, 2, \ldots$, set $s_t:= \min_{i\neq j}\dim(S_i\cap S_j) = 2s_{t-1}-r$, where $S_i,S_j\in\cS^{t-1}$.
    Construct $\cS^t:=\{S_i\cap S_j\cond S_i,S_j\in\cS^{t-1}, \dim(S_j\cap S_j)=s_t\}$, where $|\cS^t|=\binom{r}{s_t}$ and $s_t = 2^tk-(2^t-1)r$. With $k\leq r-1$, this gives $s_t\leq r - 2^t$, i.e., $s_t$ decreases exponentially with iteration $t$, upper-bounding the maximum number of iterations to $t_{max}=\cO(\log r)$.
    Further, for any $t$, $|\cS^t|=\binom{r}{s_t}\leq \binom{r}{\lceil r/2\rceil}$.
    \item Run step $1$ until $s_t\leq \lceil r/2\rceil$, at which time we can recover $\hat{U}$ as in Case $1$ above. Hence, the total time complexity of Case $2$ is $\bigO{\binom{r}{\lceil r/2\rceil}^2 nk^2\log r}$.
    \end{enumerate}
    \end{itemize}
Thus, for a $k<r$, the above method (deterministically) computes $\hat{u}_i=\pm u_i$ for all $i\in[r]$ as unit norm bases of the one-dimensional intersections, which are unique up to the sign, for a particular $\comp_k(U)$.
\end{proof}

Now, with the general result given by \cref{thm:unique-decomposition}, \cref{alg:ESvecRecovery} can be used to recover the singular vector matrices from their respective compound.
Let $M$ be a compound-decomposable matrix such that $M=\comp_k(A)$ for an $A\in\bR^{n\times m}$, and $M=LSR^\top$ is a compact SVD. To recover the left and right singular vectors of $A$, \cref{alg:ESvecRecovery} is run twice, independently with the columns of $L$ and $R$ as respective input. Note that we maintain the same order over the columns of $L$ and $R$ in the input, so that the left and right singular vectors of $A$, as generated by the algorithm, are order-consistent. The recovery conditions are formally stated in the following.
\begin{prop}
\label{prop:sign-inconsistency-svd}
     Let $M=\comp_k(A)$ for an $A\in\bR^{n\times m}$ of rank $r>k$ such that $M$ has distinct non-zero singular values. Suppose $A=V\Sigma W^\top$ and $M=L S R^\top$, given by a reduced SVD. Then, with a fixed-ordered sequence of the columns of $L$ and $R$, \cref{alg:ESvecRecovery} respectively outputs $\hat{V}$ and $\hat{W}$ such that $\hat{V}=VPS_1$ and $\hat{W}=WPS_2$, where $P\in \{0,1\}^{r\times r}$ is a permutation matrix and $S_1,S_2\in\bR^{r \times r}$ are diagonal with $\diag(S_1),\diag(S_2)\in\{1,-1\}^r$.
\end{prop}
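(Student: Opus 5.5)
The plan is to combine \cref{prop:distinct-sval-svd} with \cref{thm:unique-decomposition}, and then track how the permutation of wedge columns turns into a permutation of recovered vectors.

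First, by \cref{prop:distinct-sval-svd}, since $M$ has distinct non-zero singular values, the reduced SVD factors satisfy $L=\comp_k(V)P_k\tilde S$ and $R=\comp_k(W)P_k\tilde S$. Here $P_k$ is a permutation of the $\binom{r}{k}$ columns and $\tilde S$ is a diagonal sign matrix. The key point is that the \emph{same} $P_k$ and $\tilde S$ appear in both factors, because $L$ and $R$ come from one SVD with a fixed column ordering. Consequently, the $i$th column of $L$ is $\pm v_{I_1}\wedge\cdots\wedge v_{I_k}$ and the $i$th column of $R$ is $\pm w_{I_1}\wedge\cdots\wedge w_{I_k}$ for the same index tuple $I\in\cI_{r,k}$.

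Second, we check the hypotheses of \cref{thm:unique-decomposition} for $V$ and for $W$. Both have orthonormal columns, so all columns are unit norm and any $k<r$ of them are linearly independent. Moreover, \cref{alg:ESvecRecovery} takes an unordered set of signed $k$-wedges as input, so the permutation $P_k$ and the signs $\tilde S$ do not affect its validity. Hence it returns $\pm v_\ell$ for every $\ell\in[r]$ from $L$, and $\pm w_\ell$ from $R$.

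Third, and this is the main obstacle, we must show that the \emph{order} in which the vectors are appended to $\hat V$ and to $\hat W$ agrees, so that a single $P$ works for both. The algorithm is deterministic, and its control flow depends only on combinatorial data. Specifically, it depends on which pairs $(S_i,S_j)$, taken in the fixed index order, have intersections of a given dimension, and on whether a one-dimensional intersection was already found up to sign. Because the columns of $L$ and $R$ are indexed by the same tuples $I$, the subspace $S_i$ built from $L$ is $\spanop\{v_\ell:\ell\in I^{(i)}\}$ and the one built from $R$ is $\spanop\{w_\ell:\ell\in I^{(i)}\}$, with the same $I^{(i)}$. Linear independence of the columns gives $\dim(S_i\cap S_j)=|I^{(i)}\cap I^{(j)}|$, and the intersection equals the span of the vectors indexed by $I^{(i)}\cap I^{(j)}$, in both runs. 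The same holds inductively for the iterated intersection sets $\cS^t$ in Case~2, whose elements are indexed by identical index sets in identical order.

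Consequently, both runs execute the same branches, and the $q$th appended vector is $\pm v_{\pi(q)}$ in one run and $\pm w_{\pi(q)}$ in the other, for a common map $\pi$. The duplicate check ``$B_{:,1}\notin\hat U$, $-B_{:,1}\notin\hat U$'' fires at the same steps in both runs: by orthonormality, $\pm v_\ell=\pm v_{\ell'}$ exactly when $\ell=\ell'$, which is also exactly when $\pm w_\ell=\pm w_{\ell'}$. So $\pi$ is a bijection of $[r]$, and this gives $\hat V=VPS_1$ and $\hat W=WPS_2$ with $P$ the permutation matrix of $\pi$. The signs $S_1,S_2$ are independent, since each intersection basis is only determined up to sign. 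The careful part is making this induction over the intersection rounds precise for Case~2.
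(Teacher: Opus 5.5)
Your proposal is correct and follows the paper's route: \cref{prop:distinct-sval-svd} gives $L$ and $R$ the same column permutation and sign pattern, orthonormality of $V$ and $W$ supplies the hypotheses of \cref{thm:unique-decomposition}, and feeding the columns of $L$ and $R$ in the same order yields a common permutation $P$. Your observation that the algorithm's control flow depends only on the index sets $I^{(i)}$ (through the intersection dimensions $|I^{(i)}\cap I^{(j)}|$ and the up-to-sign duplicate test) makes rigorous the order-consistency step, which the paper only asserts in one sentence.
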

With the distinct non-zero singular value \cref{ass:svdsval} on $M$, the columns of $L$ and $R$ coincide with the columns of $\comp_k(\hat{V})$ and $\comp_k(\hat{W})$ respectively, up to the sign. Because of the orthonormality of singular vectors, the unit-norm and independence conditions in \cref{thm:unique-decomposition} are met.
For \cref{ex:examplematrix} we get
\begin{equation}
\label{eq:VrWr}
\hat{V} = 
\begin{pmatrix*}[r]
0.58 & -0.60 & -0.34 \\
0.13 & -0.32 & 0.93 \\
-0.79 & -0.35 & -0.08 \\
0.18 & 0.64 & 0.10
\end{pmatrix*}
\quad
\hat{W} = 
\begin{pmatrix*}[r]
0.19 & 0.97 & 0.16 \\
0.33 & 0.02 & -0.47 \\
0.90 & -0.16 & -0.08 \\
0.23 & -0.19 & 0.86
\end{pmatrix*}
\end{equation}
However, to compose $A$ using the recovered singular vectors, i.e., the columns of $\hat{V}$ and $\hat{W}$, generated in separate runs of \cref{alg:ESvecRecovery}, they must be order and sign consistent.
The order consistency guarantee comes from a fixed permutation matrix $P$ in \cref{prop:sign-inconsistency-svd}. Hence, the remaining task is to restore sign consistency, i.e., to find a diagonal sign-adjustment matrix $T$ with $\diag(T)\in \{1,-1\}^r$ such that $S_1=S_2 T$, which is discussed in \Cref{sec:sign_inconsistency}.

\vspace{2mm}

\subsection{Ordering the Compound Singular Values}
\label{sec:restoreMap}
Since for a reduced SVD $M=\comp_k(A)=LSR^\top$ of $M$, $S$ is not necessarily compound-decomposable,  we are required to find a compound-decomposable permutation of $\diag(S)$, where $A=V\Sigma W^\top$ is a reduced SVD of $A$. Since we have already recovered $\hat{V}$, as in \cref{prop:sign-inconsistency-svd}, we can obtain such a permutation $\comp_k(\hat\Sigma)$ satisfying $AA^\top \hat{V} = \hat{V}\hat\Sigma^2$ from 
    \begin{equation}
    \label{eq:hatSigma}
            \comp_k(\hat\Sigma)^2 = \comp_k(\hat\Sigma^2)=
            \comp_k((AA^\top\hat{V})^\top \hat{V}) = 
            \br{MM^\top\comp_k(\hat{V})}^\top \comp_k(\hat{V}).
    \end{equation}
Note that $\hat{\Sigma}$ is such that $\diag(\hat{\Sigma})$ is a permutation of $\diag(\Sigma)$ that is consistent with $\hat{V}$.
Thus, the reordering of $\diag(S)$ is given by the diagonal of $\comp_k(\hat\Sigma)$. 
For \cref{ex:examplematrix}, using the recovered $\hat{V}$ in \cref{eq:VrWr}, we obtain
\begin{equation*}
\label{eq:sigmar}
    \comp_2(\hat\Sigma) = 
    \begin{pmatrix*}[r]
    79.80 & 0.00 & 0.00 \\
    0.00 & 62.12 & 0.00 \\
    0.00 & 0.00 & 45.01
    \end{pmatrix*}.
\end{equation*}
As shown in \Cref{sec:ESvalueRecover}, $\comp_k(\hat{\Sigma})$ can then be used to recover the singular values of $A$.
\begin{rem}
\label{rem:time-OrderCompSval}
The time complexity for reordering the singular values of $M=\comp_k(A)$, as described in this subsection, involves two components as follows: 
\begin{enumerate}
    \item computation of $\comp_k(\hat{V})$ for $\hat{V}\in\bR^{n\times r}$, which amounts to $\bigO{k^3\binom{n}{k}\binom{r}{k}}$, due to the computation of $\det(\hat{V}_{I,J})$ for all $I\in \cI_{n,k}, J\in \cI_{r,k}$,
    \item matrix multiplication as in \cref{eq:hatSigma}, which is of time complexity $\bigO{\binom{n}{k}^2\binom{m}{k} + \binom{n}{k}^2\binom{r}{k}}$,
\end{enumerate}
thus amounting to the overall time complexity of $\bigO{\binom{p}{k}^3 k^2}$ where $p=\max\{m,n\}$.
\end{rem}
\subsection{Recovering Sign Consistency of Singular Vectors}
\label{sec:sign_inconsistency}
In \cref{prop:sign-inconsistency-svd}, we have seen the possible sign inconsistency between the recovered left and right singular vectors, $\hat{V}$ and $\hat{W}$ respectively, for $A$ with compact SVD $A=V\Sigma W^\top$. For $M=\comp_k(A)$ with compact SVD $M=LSR^\top$, $L$ and $R$ coincide with $\comp_k(V)$ and $\comp_k(W)$, respectively, up to identical column permutation and simultaneous sign inversions of the columns, i.e., $L=\comp_k(V) P S_0$ and $R=\comp_k(W) P S_0$ for some permutation matrix $P$ and diagonal sign matrix $S_0$. Using the distinct compound singular values given by $\comp_k(\hat{\Sigma})$, as obtained in \cref{eq:hatSigma}, we align the columns of $L$, $R$ with $\comp_k(\hat{V})$, $\comp_k(\hat{W})$ respectively, adhering to a lexicographical order. Henceforth, we will refer to this order while indexing the columns of $L$ or $R$.

Fortunately, using the sign-coupling between $L$ and $R$, we can restore sign consistency between the left and the right singular vectors of $A$. The following \cref{thm:signconsistency} states the existence of such a sign-adjustment algorithm.
\begin{prop}
\label{thm:signconsistency}
    Let $M=\comp_k(A)$ for an $A\in\bR^{n\times m}$, $p=\max\{m,n\}$,  of rank $r>k$ with compact SVDs $A=V\Sigma W^\top$, $M=L S R^\top$. 
    Let $\tilde{V} := \hat{V}=V S_1$, $\hat{W}=W S_2$ for some diagonal sign matrix $S_1,S_2\in\bR^{r\times r}$ such that $\diag(S_1),\diag(S_2)\in\{1,-1\}^r$.
    With $L,R, \tilde{V}, \hat{W}$ as input, \cref{alg:signadjustsvd} finds a diagonal sign matrix $S_3$ with $\diag(S_3)\in \{1,-1\}^r$, such that $\tilde{W}=\hat{W}S_3$ satisfies $A = \tilde{V} \Sigma \tilde{W}^ \top$.
\end{prop}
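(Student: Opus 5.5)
We need to find $S_3$ with $S_2S_3=S_1$, i.e.\ $\diag(S_3)=\diag(S_1)\odot\diag(S_2)$. The available data are the products of signs over $k$-subsets, which we read off from the sign coupling between $L$ and $R$; from these products we then solve for the individual signs. Since $S_1^2=S_2^2=I$, we have $A=V\Sigma W^\top=\tilde V S_1\Sigma S_2\hat W^\top=\tilde V\Sigma(\hat W S_2S_1)^\top$ because diagonal matrices commute. So it suffices to determine the vector $d:=\diag(S_1S_2)\in\{\pm1\}^r$ and set $S_3=\diag(d)$.

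\textbf{Step 1 (compound signs).} Write $L=\comp_k(V)S_0$ and $R=\comp_k(W)S_0$, with the columns aligned lexicographically as described above. By multiplicativity, $\comp_k(\tilde V)=\comp_k(V)\comp_k(S_1)$ and $\comp_k(\hat W)=\comp_k(W)\comp_k(S_2)$. Since $\comp_k(S_i)$ is diagonal with entries $\prod_{\ell\in I}(S_i)_{\ell\ell}$, for each $I\in\cI_{r,k}$ the inner products
\[
a_I:=\comp_k(\tilde V)_{:,I}^\top L_{:,I}=\Big(\prod_{\ell\in I}(S_1)_{\ell\ell}\Big)(S_0)_{II},\qquad b_I:=\comp_k(\hat W)_{:,I}^\top R_{:,I}=\Big(\prod_{\ell\in I}(S_2)_{\ell\ell}\Big)(S_0)_{II}
\]
are each $\pm1$, because $\comp_k(V)$ has orthonormal columns. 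Consequently $c_I:=a_Ib_I=\prod_{\ell\in I}d_\ell$, and the unknown $S_0$ cancels. This is exactly the sign coupling between $L$ and $R$. Computing these quantities costs no more than forming $\comp_k(\tilde V)$ and $\comp_k(\hat W)$, as in \cref{rem:time-OrderCompSval}.

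\textbf{Step 2 (solving for $d$).} We recover $d$ from the products $c_I$ over all $k$-subsets of $[r]$, using $k<r$. Take any two $k$-subsets $I,J$ with $I\setminus J=\{i\}$ and $J\setminus I=\{j\}$. Then $c_Ic_J=d_id_j$, so every pairwise ratio $d_id_j$ is determined, and hence $d$ is determined up to a global sign: fix $d_1=1$ and set $d_j=d_1d_j$. To fix the global sign, note that $c_I=\prod_{\ell\in I}d_\ell$ for the true $d$, and flipping all signs multiplies this product by $(-1)^k$. Therefore:
\begin{itemize}
\item if $k$ is odd, choose the global sign so that $\prod_{\ell\in I}d_\ell=c_I$ for one $I$;
\item if $k$ is even, both $d$ and $-d$ give $S_1S_2$ up to an overall sign flip.
\end{itemize}
In the even case, replacing $S_3$ by $-S_3$ changes $\tilde V\Sigma\tilde W^\top$ to $-A$. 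This is the main subtlety, and it must be handled with care. When $k$ is even, $\comp_k$ cannot distinguish $A$ from $-A$, so the proposition has to be read as giving $A$ up to the overall sign permitted in \cref{thm:main}, or else the algorithm must use additional information. With the data $L,R,\tilde V,\hat W$ alone, I expect that only $\pm A$ is recoverable for even $k$. I would therefore prove the statement exactly for odd $k$, and for even $k$ show that the output equals $cA$ with $c^k=1$. This is the part where I expect the main obstacle: matching the exact wording of the statement with this inherent ambiguity.

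\textbf{Step 3 (concluding).} With $d$ chosen as in Step 2, set $S_3=\diag(d)$. Then $\hat WS_3=WS_2S_1S_2$ up to the global sign discussed above, which equals $WS_1$. Hence $\tilde V\Sigma\tilde W^\top=VS_1\Sigma S_1W^\top=A$. Each $c_I$ costs one inner product, and the propagation of signs costs $\bigO{r}$.
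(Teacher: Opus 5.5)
Your argument is correct, and it reaches the conclusion by a more direct route than the paper's. Both exploit the same cancellation of $S_0$; the difference lies in how the signs are then solved for. In \cref{alg:signadjustsvd}, flipping the columns of $R$ indexed by $J$ amounts to multiplying $R_{:,I}$ by your $a_I$, which turns $R$ into $\comp_k(WS_1)$. The paper then exhaustively searches all $2^r$ sign matrices $S$ for one with $\comp_k(\hat W S)$ equal to the flipped $R$. Because $\comp_k(W)$ has orthonormal columns, that search criterion is exactly $\prod_{\ell\in I}s_\ell=c_I$ for all $I\in\mathcal{I}_{r,k}$.

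The paper never argues that a matching $S$ satisfies $S_2S=S_1$. Your Step~2 is precisely this missing uniqueness statement: the solutions are exactly $\pm d$, and only $d$ when $k$ is odd. Add the one-line equivalence above, and your argument becomes a proof about \cref{alg:signadjustsvd} itself. Your propagation over neighbouring $k$-subsets is a mod-2 analogue of the log-linear system in \cref{thm:linearsystem}. Once the $c_I$ are formed, it replaces the $O(\binom{r}{k}2^r)$ search by a computation essentially linear in $r$, which would remove the exponential term from the complexity bound in \cref{thm:main}.

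Your treatment of even $k$ is also right, and it is not a defect of your proof. For even $k$, $(A,S_1,S_2)$ and $(-A,S_1,-S_2)$ produce identical inputs $L,R,\tilde V,\hat W$. So no procedure can guarantee $A=\tilde V\Sigma\tilde W^\top$; in particular \cref{alg:signadjustsvd} cannot, since its search may return $-S_1S_2$. The proposition holds only up to a factor $c$ with $c^k=1$, which is how \cref{thm:main} states it.
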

\vspace{2mm}
\noindent The working principle of \cref{alg:signadjustsvd} has the following components:
\begin{enumerate}
    \item Compute $\comp_k(\tilde{V})$.
    \item Find the set $J = \left\{i\cond L_{:,i}=-[\comp_k(\hat{V})]_{:,i} , 1\leq i\leq \binom{r}{k} \right\}$.
    \item For $j\in J$, $R_{:,j}=-R_{:,j}$.
    \item Find diagonal $S_3$ with $\diag(S_3)\in\{1,-1\}^r$ such that $\comp_k(\hat{W}S_3)$ matches $R$ in sign.
\end{enumerate}
\begin{rem}
\label{rem:time-signconsistency}
Computing $\comp_k(\hat{V})$ and $\comp_k(\hat{W})$ respectively in step $1$ and $4$, has time complexity \linebreak $\bigO{k^3\binom{p}{k}\binom{r}{k}}$, analogously to point $1$ of \cref{rem:time-OrderCompSval}. Step $2,3$ have time complexity $\bigO{\binom{r}{k}}$. In step $4$, an exhaustive search over all possible sign permutations assigned to the columns of $\hat{W}\in\bR^{m\times r}$, while checking for a sign match between the columns of its compound and that of $R$, is of $\bigO{\binom{r}{k} 2^r}$ time complexity. Together, \cref{alg:signadjustsvd} results in overall time complexity of $\bigO{\binom{p}{k}\binom{r}{k}k^3+\binom{r}{k} 2^r}$.
\end{rem}

\begin{algorithm}[h]
    \small
    \caption{\small{Sign Adjustment between left ($\hat{V}$) and right ($\hat{W}$}) singular vectors}
    \label{alg:signadjustsvd}
    \begin{algorithmic}[1]
        \State \textbf{Input}: $\tilde{V}, \hat{W}, L, R, r, k$
        \State \textbf{Output}: $\tilde{W}$
        \State Set indices $J = \left\{i\cond L_{:,i}=-[\comp_k(\hat{V}]_{:,i}\ ,\ 1\leq i\leq \binom{r}{k} \right\}$
        \For{$j \gets 1\cdots\binom{r}{k}$}
            \begin{equation*}
            \label{eq:signadjW}
            R_{:,j} = 
            \begin{cases}
            -R_{:,j} & \text{if } j\in J\\
            R_{:,j} & \text{otherwise}
            \end{cases}
            \end{equation*}
        \EndFor
        \State Search for a diagonal $S$ with $s_{ii}\in\{1,-1\}$ for all $i$, such that $\tilde{W}= \hat{W} S$ and $\comp_k(\tilde{W})=R$.
        \State \Return $\tilde{W}$
    \end{algorithmic}
\end{algorithm}
For \cref{ex:examplematrix}, \cref{alg:signadjustsvd} returns the sign-adjusted singular vectors
\begin{equation*}
    \tilde{V} = \hat{V} = 
    \begin{pmatrix*}[r]
    0.58 & -0.60 & -0.34 \\
    0.13 & -0.32 & 0.93 \\
    -0.79 & -0.35 & -0.08 \\
    0.18 & 0.64 & 0.10
    \end{pmatrix*} \quad \text{and} \quad
    \tilde{W} =
    \begin{pmatrix*}[r]
    0.19 & 0.97 & -0.16 \\
    0.33 & 0.02 & 0.47 \\
    0.90 & -0.16 & 0.08 \\
    0.23 & -0.19 & -0.86
    \end{pmatrix*}
\end{equation*}
where in \cref{eq:VrWr}, wedge decomposition gave us,
\[
\hat{W} = 
\begin{pmatrix*}[r]
0.19 & 0.97 & 0.16 \\
0.33 & 0.02 & -0.47 \\
0.90 & -0.16 & -0.08 \\
0.23 & -0.19 & 0.86
\end{pmatrix*}
\]
Thus, by \cref{alg:signadjustsvd} we retrieve a sign-consistent version of the left and right singular vectors of the base matrix $A$.
Next, we present a method for recovering the singular values of $A$ from their compound.

%% file: esvalue.tex
\subsection{Recovering Singular Values}
\label{sec:ESvalueRecover}

In the previous \Cref{sec:restoreMap}, we have established a lexicographic ordering over the singular values of $M=\comp_k(A)$ where $A=V\Sigma W^\top$ is a reduced SVD, and the ordering is given by $\diag(\comp_k(\hat\Sigma))$. Since~$\comp_k(\hat\Sigma)$ corresponds to a reduced SVD, it is invertible with all diagonal entries of~$\hat\Sigma$ being positive.
The following theorem states that we can uniquely recover $\hat\Sigma$ from given~$\comp_k(\hat\Sigma)$.
\begin{lem}
\label{thm:uniqueeval}
    Let~$\cD,\cD' \in \bR^{n \times n}$ be diagonal matrices whose diagonal entries are all positive and $k < n$. If~$\comp_k(\cD) = \comp_k(\cD')$, then~$\cD' = \cD$.
\end{lem}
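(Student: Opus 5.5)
Write $\cD = \diag(d_1,\dots,d_n)$ and $\cD' = \diag(d'_1,\dots,d'_n)$ with all $d_i,d'_i>0$. The plan is to pass to logarithms and reduce the claim to the injectivity of a linear map. Since $\cD$ is diagonal, $\comp_k(\cD)$ is diagonal, and its diagonal entry indexed by $I\in\cI_{n,k}$ is $\prod_{i\in I} d_i$. The hypothesis $\comp_k(\cD)=\comp_k(\cD')$ therefore says
\[
\prod_{i\in I} d_i = \prod_{i\in I} d'_i \quad \text{for all } I\in\cI_{n,k}.
\]
Positivity lets us take logarithms. With $x=\log(\diag(\cD))-\log(\diag(\cD'))\in\bR^n$, we get $\sum_{i\in I} x_i = 0$ for every $k$-subset $I$. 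It then remains to show that this forces $x=0$ whenever $1\le k<n$.

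The key step uses $k<n$ to compare two $k$-subsets that differ in a single element. Fix indices $a\neq b$. Because $k\le n-1$, we can choose a $(k-1)$-subset $T\subset[n]\setminus\{a,b\}$; this is possible since $n-2\ge k-1$, and for $k=1$ we simply take $T=\emptyset$. Subtracting the equations for $I=T\cup\{a\}$ and $J=T\cup\{b\}$ gives $x_a=x_b$. Hence all $x_i$ equal a common value $c$. Any single equation then reads $kc=0$, so $c=0$ because $k\ge 1$. This yields $x=0$, i.e.\ $d_i=d'_i$ for all $i$, and so $\cD'=\cD$.

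The only delicate point is the choice of $T$, which is exactly where $k<n$ is needed. For $k=n$ the statement fails, since only the product of the entries is determined. Positivity is also essential: it is what makes the logarithm legitimate and excludes sign ambiguities, which otherwise appear already for $n=2$, $k=1$ if negative entries were allowed. Apart from these two points the argument is routine, and it also explains the algorithmic recovery mentioned in the paper: $\log \diag(\hat\Sigma)$ is the unique solution of the full-column-rank linear system $\sum_{i\in I} y_i = \log(\comp_k(\hat\Sigma))_{I,I}$, $I\in\cI_{n,k}$.
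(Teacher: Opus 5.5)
Your proof is correct and follows essentially the same route as the paper. The paper compares the diagonal entries indexed by $I\setminus\{p\}$ and $I\setminus\{q\}$ for a $(k+1)$-set $I \ni p,q$, which are exactly your $T\cup\{q\}$ and $T\cup\{p\}$; it takes ratios where you take log-differences to obtain $d_i = c\, d'_i$ for all $i$, and then uses positivity to force $c=1$.
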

\begin{proof}
    Let $\cD=\diag([d_1,\ldots,d_n])$ with~$d_1,\dots,d_n \in \bR_{>0}$. Suppose for contradiction that there exists a diagonal $\cD'=\diag([d_1',\ldots,d_n'])$ with~$d_1',\dots,d_n' \in \bR_{>0}$ and such that $\comp_k(\cD')=\comp_k(\cD)$. Consider an index set $I = \{i_1,\ldots,i_{k+1}\}$ such that $1\leq i_1<\cdots<i_{k+1}\leq n$ and let $p,q\in I$ with $p<q$ define the tuples $I_p := I\setminus \{p\}$ and $I_q := I\setminus\{q\}$ in $\cI_{n,k}$. Then, comparing the $I_p$th element of $\diag(\comp_k(\cD))$ and $\diag(\comp_k(\cD'))$ gives
    \begin{equation}
    \label{eq:evalkprod1}
    \prod_{j\in I_p}d_j = \prod_{j\in I_p}d_j'
    \end{equation}
    and similarly for $I_q$th elements
    \begin{equation}
    \label{eq:evalkprod2}
    \prod_{j\in I_q}d_j = \prod_{j\in I_q}d_j'.
    \end{equation}
    Thus, taking the ratio of both sides of \cref{eq:evalkprod1,eq:evalkprod2}, we get,
    \[
    \frac{d_p}{d_q} = \frac{d'_p}{d'_q} \iff \frac{d_p}{d'_p} = \frac{d_q}{d'_q}
    \]
    As the above relation holds for any arbitrary $p,q\in [n]$, the chain rule implies $ d_i=c \cdot d'_i$ for all $i\in[n]$ and some universal constant $c$.
    It is easy to see from the equality $\comp_k(\cD) = \comp_k(\cD')$ and the fact that the diagonal entries of both matrices are all positive that $c=1$.
    This proves the result by contradiction to our initial assumption.
\end{proof}
The following \cref{thm:linearsystem} shows that finding the singular values of $A$ from the singular values of~$\comp_k(A)$ can be achieved by solving a linear system of equations.
\begin{lem}
\label{thm:linearsystem}
    Given $\comp_k(\Lambda)$ for a positive diagonal matrix $\Lambda\in\bR^{r\times r}$ and $k<r$, the diagonal entries of $\Lambda$ are the exponential of the unique solution to the linear system $y = Lx$, where $y=\log(\diag(\comp_k(\Lambda)))$ and $L\in\bR^{\binom{r}{k}\times r}$ is defined as
    \[
    L(I,j) := 
    \begin{cases}
        1 & \text{if }j\in I\\
        0 & \text{otherwise}
    \end{cases}
    \]
    for all $I\in \cI_{r,k}$ and $j\in[r]$.
\end{lem}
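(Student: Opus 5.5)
The plan is to take logarithms entrywise, so that the multiplicative structure of $\comp_k(\Lambda)$ becomes a linear one, and then to show that the resulting linear map is injective.

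Write $\Lambda=\diag(\lambda_1,\ldots,\lambda_r)$ with $\lambda_j>0$, and put $x_j:=\log\lambda_j$. Since $\Lambda$ is diagonal, $\comp_k(\Lambda)$ is diagonal. Its $(I,I)$ entry is $\det(\Lambda_{I,I})=\prod_{j\in I}\lambda_j>0$; the off-diagonal minors vanish because they contain a zero row. So $y=\log(\diag(\comp_k(\Lambda)))$ is well defined, and its $I$th entry is
\[
y_I=\sum_{j\in I}x_j=\sum_{j=1}^r L(I,j)\,x_j .
\]
Hence $x=\log(\diag(\Lambda))$ solves $y=Lx$, and $\exp(x)$ returns the diagonal of $\Lambda$. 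It remains to show that the solution is unique, that is, that $L$ has full column rank $r$ whenever $k<r$.

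The injectivity step is the only real content, and I would argue it directly from the structure of $L$, following the proof of \cref{thm:uniqueeval} but in additive form. Suppose $Lx=0$. Since $k<r$, any two indices $p\neq q$ in $[r]$ can be placed in a $(k+1)$-subset $I\subseteq[r]$. Subtracting the rows indexed by $I\setminus\{p\}$ and $I\setminus\{q\}$ gives $x_q-x_p=0$. Therefore all $x_j$ equal some common value $c$. Then any single row of $Lx=0$ reads $kc=0$, so $c=0$ and $x=0$. Consequently $\ker L=\{0\}$, and the solution of $y=Lx$ is unique.

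Alternatively, one could deduce uniqueness from \cref{thm:uniqueeval}. Any solution $x'$ gives the positive diagonal matrix $\Lambda':=\diag(\exp(x'))$ with $\comp_k(\Lambda')=\comp_k(\Lambda)$, so $\Lambda'=\Lambda$. This uses only the exponential map and the earlier lemma, which is an equally short route.

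For completeness I would add that, since $y$ lies in the range of $L$, the unique solution can be computed as $x=(L^\top L)^{-1}L^\top y$, where $L^\top L$ is invertible by the rank argument above. The main obstacle is only the case distinction $k<r$ versus $k=r$. When $k=r$, $L$ is the single row of ones, and uniqueness fails; this is consistent with \cref{prop:compound_nonunique}.
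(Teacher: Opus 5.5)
Your proof is correct and follows the same route as the paper: take logarithms of the diagonal entries $\prod_{j\in I}\lambda_j$ of $\comp_k(\Lambda)$ to obtain $y=Lx$, which $x=\log(\diag(\Lambda))$ solves. The only difference is that the paper's proof does not argue uniqueness itself and implicitly relies on \cref{thm:uniqueeval}, which is your alternative route; your direct proof that $\ker L=\{0\}$ is the additive form of that lemma's ratio argument and makes the word ``unique'' in the statement explicit.
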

\begin{proof} By item 3 of~\cref{thm:compoundProperty}, the diagonal elements of $\comp_k(\Lambda)$ are given by
\[
\lambda_I = \prod_{j\in I} \lambda_j
\]
or equivalently, 
\begin{equation}
\label{eq:loglinear}
    \log \lambda_I = \sum_{j\in I} \log \lambda_j
\end{equation}
for each of the indices $I\in\cI_{r,k}$. The above relation gives us the following linear system to be solved for $x\in\bR_{>0}^{r}$:
\begin{equation}
\label{eq:linearSystem}
    y = Lx
\end{equation}
where $y=\log(\diag(\comp_k(\Lambda)))\in\bR^{\binom{r}{k}}$ and the mapping $L:\bR^r \to \bR^{\binom{r}{k}}$ is defined as,
\begin{equation*}
    L(I,j) := 
    \begin{cases}
        1 & \text{if }j\in I\\
        0 & \text{otherwise}
    \end{cases}
\end{equation*}
each row corresponding to the composition of $\lambda_I$ for $I\in\cI_{r,k}$, as per \cref{eq:loglinear}. For any solution $x^\ast$ of the linear system, $\Lambda = \diag(\exp(x^\ast))$ gives the diagonal matrix.
\end{proof}
In \cref{ex:examplematrix}, the solution to $\hat\Sigma$ with $r=3$ can be obtained by solving
\[
    \begin{pmatrix}
    4.38 \\
    4.13 \\
    3.81
    \end{pmatrix}
    =
    \begin{pmatrix}
    1 & 1 & 0 \\
    1 & 0 & 1 \\
    0 & 1 & 1
    \end{pmatrix}
    x
\]
which gives $\diag(\hat\Sigma)=\exp(x)=[10.50,7.60,5.92]^\top$, and by that recovers the true values.

%% file: proof_main.tex
\section{Proof of \cref{thm:main}}
\label{sec:proof_main}
Now that we have discussed all parts involved in the computation of an inverse-compound for a compound-decomposable matrix $M\in \bR^{\binom{n}{k}\times \binom{m}{k}}$ and given $k\leq \min\{n,m\}$, we can logically stitch them together to arrive at \cref{thm:main}. Assume that $A\in\bR^{n\times m}$ has $\rank(A)=r$, $M = \comp_k(A)$ with compact SVD $M = L S R^\top$, where $L\in \bR^{\binom{n}{k}\times \binom{r}{k}}, R\in \bR^{\binom{m}{k}\times \binom{r}{k}}$ and $S\in \bR^{\binom{r}{k}\times \binom{r}{k}}$. Further, let the (unknown) $A$ have a compact SVD $A=V\Sigma W^\top$. For $k<r$, the inverse-compound method consists of two components: i) find $V, W$ from $L, R$ respectively, and ii) find $\Sigma$ from $S$.  
\cref{ass:svdsval} and \cref{prop:distinct-sval-svd} ensure that $L$ and $R$ coincide with $\comp_k(V)$ and $\comp_k(W)$ respectively, up to a fixed column permutation, and simultaneous reversals of column signs.

Given $L, R$ and $k<r$, \cref{thm:unique-decomposition} provides the recovery of $\hat{V}$ and $\hat{W}$, which coincide with $V$ and $W$, respectively, up to sign reversals of the columns.
Although the columns of $\hat{V}$ and $\hat{W}$ are order-consistent, \cref{prop:sign-inconsistency-svd} shows possible sign inconsistency between them, which is then restored by \cref{alg:signadjustsvd}, giving $\tilde{W}$ with column signs consistent with that of $\tilde{V}=\hat{V}$, as guaranteed by \cref{thm:signconsistency}.
\Cref{sec:restoreMap} restores a correct lexicographical order over the diagonal entries of $S$, given by $\comp_k(\hat{\Sigma})$, where $\diag(\hat{\Sigma})$ is a permuted version of $\diag(\Sigma)$, which is order-consistent with $\hat{V}$. The reordered $\diag(S)$, i.e., $\diag(\comp_k(\hat{\Sigma}))$, while given as the input to a linear system in \cref{thm:linearsystem}, outputs the solution $\hat\Sigma$, due to the uniqueness guarantee in \cref{thm:uniqueeval}.
Lastly, by composition, we obtain $\tilde{V} \hat\Sigma \tilde{W}^\top = cA$, where $c \in \{1,-1\}$ and $c^k=1$, thus proving \cref{thm:main}.

The total time complexity of the inverse-compound method, in the case of $k<r$, is determined by the singular vector recovery component, i.e., by aggregating the time complexity results presented in \cref{thm:unique-decomposition}, \cref{rem:time-OrderCompSval}, and \cref{rem:time-signconsistency} respectively.
However, for $k=r$, the time complexity is only determined by \cref{thm:unique-decomposition}.
The time complexity for the singular value recovery component is linear in $r\binom{r}{k}$, hence subsumed by the previous components.
Note that the time complexity provided in the theorem implicitly takes into account the SVD (compact) computation time of $M$ with rank $\binom{r}{k}$, which is $\bigO{\binom{n}{k}\binom{m}{k}\binom{r}{k}}$ (see, e.g., \cite{trefethen1997numerical}).

For $k=\rank(A)<\min\{n,m\}$, i.e., $\rank(\comp_k(A)) = 1$ by item $4$ of \cref{thm:compoundProperty}, we proved non-uniqueness of inverse-compounds in \cref{prop:compound_nonunique}.
Our algorithm to recover the set of inverse-compounds for such a case is then as follows:
let~$M \in \bR^{\binom{n}{k} \times \binom{m}{k}}$ be a compound-decomposable matrix of rank one. Compute some compact SVD of~$M$, i.e., $\sigma > 0, u \in \bR^{\binom{n}{k}}, v \in \bR^{\binom{m}{k}}$ such that~$M = \sigma uv^\top$ and construct $M_u,M_v$ as in \cref{thm:wedgeMat}. Then, compute $U \in \bR^{n \times k}, V \in \bR^{m \times k}$ to be some orthonormal basis matrices of the kernels of~$M_u,M_v$ respectively, i.e., such that~$U^\top U = V^\top V = I_k$ and~$\spanop(U) = \ker(M_u)$ and~$\spanop(V) = \ker(M_v)$. By construction, the~$k$-dimensional subspaces associated with the (decomposable and nonzero) vectors~$\comp_k(U),\comp_k(V)$. Hence, there exist positive scalars~$\alpha,\beta$ such that~$u = \alpha \comp_k(U)$ and~$v = \beta \comp_k(V)$. Let~$\Sigma \in \bR^{k \times k}$ be the diagonal matrix~$\Sigma = (\alpha \sigma \beta)^{1/k} I_k$, then $A = U \Sigma V^\top$ is such that~$\comp_k(A) = M$. To construct the whole set in \cref{prop:compound_nonunique}, note that~$A = (U \Sigma) V^\top$ is a rank-revealing factorization of~$A$, such that the whole pre-image is given by
\[
    \{ B \in \bR^{n \times m}: U\Sigma T V^\top \text{ with } \det(T) = 1\}.
\]

%% file: conclusion.tex
\section{Conclusion}
\label{sec:conclusion}
In this paper, we presented an algorithm, which, given a compound-decomposable matrix~$M \in \bR^{\binom{n}{k} \times \binom{m}{k}}$, provably finds a matrix~$A\in\bR^{n\times m}$ such that the $k$th multiplicative compound matrix of $A$ equals $M$. The algorithm is based on singular value decomposition of $M$, and its time complexity is polynomial in the dimensions of~$M$.

There are several directions for future work. First, the application of this algorithm to questions in dynamical systems and control theory will be the focus of follow-up work. Second, the question of whether a given matrix is compound-decomposable remains open. Finally, assuming a given matrix is not compound-decomposable, it might still be useful to find matrices whose $k$th multiplicative compound is the best possible approximation, and designing an algorithm to find such matrices will be done in future work.

\section*{Acknowledgements}
The work of Debojyoti Dey and Christian Grussler was supported by the Israel Science Foundation (grant no. 2406/22) and the Bernard M. Gordon Center for Systems Engineering at the Technion -- IIT. The work of Ron Ofir was supported in part by the Air Force Office of Scientific Research, under award numbers FA9550-23-1-0175 and FA9550-25-1-0223, and by the Viterbi Fellowship, Technion -- IIT.

%% file: a1.tex
\section{Proof of \cref{thm:adjcomp_bijection}}
\crefalias{section}{appendix} %
\label{app:proofs}
    \noindent We recall that the adjugate $\adj(A)$ of $A\in\bR^{n\times n}$ is given by
    \[
    [\adj(A)]_{ij} = (-1)^{i+j}M_{ji}
    \]
    where $M_{ij}$ is the determinant of the matrix that results from $A$ by deleting its $i$th row and $j$th column, i.e., $M_{ij}=\det(A_{[n]-\{i\},[n]-\{j\}})$.
    From the definition of multiplicative compound, this means that
    \[
    [\comp_{n-1}(A)]_{ij} = M_{n-i,n-j}.
    \]
    Thus, $\adj(\cdot)$ and $\comp_{n-1}(\cdot)$ are related by
    \begin{equation}
    \label{eq:adjcompmap}
        [\adj(A) ^\top]_{ij} = (-1)^{i+j}[\comp_{n-1}(A)]_{n-i,n-j}
    \end{equation}
    which leads to the transformation formula
    \begin{equation}
    \label{eq:adj-comp-transformation_1}
    \adj(A) = S P \comp_{n-1}(A)^\top P S,
    \end{equation}
    where $P=(p_{ij})\in \{0,1\}^{n\times n}$ is symmetric and anti-diagonal with non-zero entries $p_{i,n-i+1}=1$ such that $[P X P]_{ij}=X_{n-i,n-j}$, and $S=(s_{ij})\in\{-1,0,1\}^{n\times n}$ is diagonal with $s_{ii}=(-1)^i$ such that $[SXS]_{ij}= (-1)^{i+j}X_{ij}$ for an $X\in\bR^{n\times n}$.
    In the following, we summarize some useful properties of $S$ and $P$:
    \begin{enumerate}
        \item $\det(S) = (-1)^{\frac{n(n+1)}{2}}$.
        \item Since $
        [\comp_{n-1}(S)]_{ii} =
        \det(S) (-1)^{n-i+1} = (-1)^ {\frac{n(n+3)}{2}-i+1}
        $, $\comp_{n-1}(S) = (-1)^{\frac{n(n+3)}{2}+1} S$.
        \item $\det(P) = (-1)^{\frac{(n-1)n}{2}}$ and, $\comp_{n-1}(P) = (-1)^{\frac{(n-1)(n-2)}{2}} P$.
        \item $T:= SP$ is anti-diagonal with non-zero entries $t_{i,(n-i+1)}=(-1)^i$.
        \item $SP = (PS)^\top$
        \item $(SP)^2 = (-1)^{n+1} I_n = (PS)^2$.
    \end{enumerate}
    Using these properties, and substituting $A$ by $\adj(A)$ in \cref{eq:adj-comp-transformation_1}, gives
    \begin{align*}
        \adj(\adj(A)) &= S P \comp_{n-1}(S P \comp_{n-1}(A)^\top P S)^\top P S\\
        &= (SP)^2 \comp_{n-1}(\comp_{n-1}(A)) (PS)^2 &&(\text{by Cauchy-Binet and  point }2,3)\\
        &= \comp_{n-1}(\comp_{n-1}(A)). &&(\text{by point }6).
    \end{align*}
    This completes the proof of \cref{thm:adjcomp_bijection}. \qed